\newtheorem{theorem}{Theorem}[section]
\newtheorem{corollary}[theorem]{Corollary}
\newtheorem{proposition}[theorem]{Proposition}
\theoremstyle{definition}
\newtheorem{definition}[theorem]{Definition}
\numberwithin{equation}{section}
\renewcommand{\geq}{\geqslant}
\renewcommand{\leq}{\leqslant}
\title{Properties of $R(X), R(a, X) \mbox{ and  } RW(a, X)$}
\author[T. Dalby]{Tim Dalby}
\date{\today}
\keywords{fixed point, $R(X), R(a, X), RW(a, X)$, property($M$), nonstrict Opial condition}
\subjclass[2010]{46B10, 47H09, 47H10}
\email{tim\_dalby@bigpond.com}
\begin{document}

\parindent = 0pt
\parskip = 8pt

\begin{abstract}

In the context of metric fixed point theory in Banach spaces three moduli have played an important role.  These are R(X), R(a, X) and RW(a, X). This paper looks at some of their properties.  Also investigated is what happens when they take on the value of 1.  The situation where these moduli are set in dual space is also considered.

\end{abstract}

\maketitle

\section{Introduction}   
   
The Banach space moduli of  $R(X), R(a, X) \mbox{ and  } RW(a, X)$ were introduced within the context of fixed point theory.  This area of research looks at whether, in an infinite dimensional real Banach space, every nonexpansive mappings on every weak compact convex nonempty set has a fixed point.  If this is so then the space is said to have the weak fixed point property (w-FPP).  It can be shown that this propery is separably determined so in this paper the Banach space is assumed to be separable.  

If the sets are not necessarily weak compact but closed then the property is called the fixed point property (FPP).

Another aspect of this topic is that it involves weakly convergent sequences and usually a translation is employed so that the sequence being studied is weakly convergent to zero.  Inherent in most results is the interplay between weak null sequences and the norm.  Much more background can be found in Goebel and Kirk [14] and  Kirk and Sims [18].
   
These moduli all measure a relationship between weak null sequences and the norm and they have similar definitions.  These definitions are given in the next section.  Because of the close similarity of the definitions it is natural to ask how the three $Rs$ are related, do they share similar properties and, if they take on certain values, what does mean for that Banach space.  This paper explores these questions.

\section{Definitions}
 
\begin{definition}
   
[Garc\'{i}a-Falset, 10]
\[ R(X) = \sup\{\liminf_{n \rightarrow \infty}\| x_n + x \| : \| x \| \leq 1, \| x _n \| \leq 1 \mbox{ for all } n, x_n \rightharpoonup 0\}. \]
\end{definition}

So $1 \leq R(X) \leq 2$.

In [10] Garc\'{i}a-Falset showed that if a Banach space, $X$, has the nonstrict Opial property and $R(X) < 2$ then $X$ has the w-FPP.  About the same time Prus [24] produced a similar result.  Later Garc\'{i}a-Falset [11] was able to drop the nonstrict Opial condition.

Dom\'{i}nguez-Benavides in [7] created a new version of $R(X)$ this time with a parameter and an extra condition on the weak null sequences.

\begin {definition}

[Dom\'{i}nguez-Benavides, 7] 
\[ \mbox{ For } a \geq 0 \mbox{ let } R(a, X) = \sup\{\liminf_{n \rightarrow \infty}\| x_n + x \| : \| x \| \leq a, \| x _n \| \leq 1 \mbox{ for all } n, x_n \rightharpoonup 0 \]
\[ \mbox{ where } D[(x_n)] = \limsup_{n \rightarrow \infty} \left ( \limsup_{m \rightarrow \infty}\| x_n - x_m \| \right ) \leq 1 \}. \]

\end{definition}

In [7] Dom\'{i}nguez-Benavides showed that if $R(a, X) < 1 + a$ for some $a \geq 0$ then $X$ has the w-FPP.  In addition, it was shown that there is a link between $R(a, X)$ and the modulus of near uniform smoothness, $\Gamma(t)$, in reflexive Banach spaces.  This link was explored further in [12] where the third one of the $Rs$ was defined.

\begin {definition}

[Garc\'{i}a-Falset, Llorens-Fuster and Mazcu\~{n}an-Navarroa, 12]
\[ \mbox{ For }a > 0 \mbox{ let } RW(a, X) = \sup\{\liminf_{n \rightarrow \infty}\| x_n + x \| \wedge \liminf_{n \rightarrow \infty}\| x_n - x \| : \| x \| \leq a, \]
\[ \| x _n \| \leq 1 \mbox{ for all } n, x_n \rightharpoonup 0 \}. \]

\end{definition}

In all three definitions $\liminf$ can be replaced by $\limsup.$

Again there is a link between $RW(a, X)$ and $\Gamma(t)$.  The authors exploited this link with $\Gamma(t)$ to show that uniformly nonsquare Banach spaces have the FPP and thus solved a long standing conjecture.

In [12] it was shown that $R(a, X) \leq RW(a, X) \mbox{ for all } a > 0.$  Using this inequality and the definitions it is clear that $R(1, X) \leq RW(1, X) \leq R(X).$

So $RW(a, X)$ is the newest modulus and sits in the ``middle'' between $R(a, X)$ and $R(X).$  It is probably the most useful of the three because of its links to $\Gamma(t)$ and $\Gamma'(t).$  This is discussed in Section 4.

The following results are motivated by corollary 4.3 of [12].

\section{Properties of $R(a, X)$}

It is straightforward to see that
\[ 1 \vee a \leq R(a, X) \leq 1 + a \]
and
\[ \mbox{ if } 0 \leq a \leq b \mbox{ then } R(a, X) \leq R(b, X). \]

\begin{proposition}

Let $X$ be a separable Banach space then there exist $a > 0$ such that $R(a, X) = 1 + a$ if and only if $R(b, X) = 1 + b$ for all $b > 0.$

\end{proposition}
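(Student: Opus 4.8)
The plan is to prove the nontrivial direction: assuming $R(a_0, X) = 1 + a_0$ for some particular $a_0 > 0$, deduce $R(b, X) = 1 + b$ for every $b > 0$. The reverse direction is immediate (take any $b > 0$). Since we already know $R(b,X) \le 1+b$ always, and $R(b,X)$ is nondecreasing in $b$, it suffices to produce, for each $b > 0$, a witnessing configuration — a norm-one weak null sequence $(x_n)$ with $D[(x_n)] \le 1$ and a vector $x$ with $\|x\| \le b$ — making $\liminf_n \|x_n + x\|$ as close to $1 + b$ as we like.

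First I would extract the structure from the hypothesis. Since $R(a_0, X) = 1 + a_0$ is a supremum, for each $\varepsilon > 0$ there is a norm-one weak null sequence $(x_n^\varepsilon)$ with $D[(x_n^\varepsilon)] \le 1$ and a vector $x^\varepsilon$, $\|x^\varepsilon\| \le a_0$, with $\liminf_n \|x_n^\varepsilon + x^\varepsilon\| > 1 + a_0 - \varepsilon$. The key observation is that this forces $\|x^\varepsilon\|$ itself to be close to $a_0$ (since $\liminf_n \|x_n^\varepsilon + x^\varepsilon\| \le \liminf_n \|x_n^\varepsilon\| + \|x^\varepsilon\| \le 1 + \|x^\varepsilon\|$), and moreover the triangle inequality is nearly saturated, so the sequence $(x_n^\varepsilon)$ and the vector $x^\varepsilon$ are ``nearly aligned'' in the relevant sense. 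I would then argue that one may assume $\|x^\varepsilon\| = a_0$ exactly after a harmless rescaling, or carry the small error through.

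The main step is a scaling argument to pass from parameter $a_0$ to an arbitrary $b$. For $b \ge a_0$: given a near-extremal configuration at $a_0$, keep the same weak null sequence $(x_n)$ (still norm one, still $D[(x_n)] \le 1$) but replace $x$ by $\frac{b}{\|x\|}\, x$, which has norm $b$; one checks that $\liminf_n \|x_n + \frac{b}{\|x\|} x\|$ is close to $1 + b$, using that the original triangle inequality was nearly tight. For $b < a_0$: here one cannot simply shrink $x$, because shrinking a near-extremal vector at $a_0$ need not stay near-extremal at $b$ — so instead I would scale the \emph{sequence}. Put $y_n = \tfrac{b}{a_0} x_n$ and $y = \tfrac{b}{a_0} x$; then $\|y_n\| = b/a_0 \le 1$, $D[(y_n)] \le b/a_0 \le 1$, $\|y\| \le b$, and $\liminf_n \|y_n + y\| = \tfrac{b}{a_0}\liminf_n\|x_n + x\|$ is close to $\tfrac{b}{a_0}(1 + a_0) = \tfrac{b}{a_0} + b \ge 1 + b$. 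Hence $R(b, X) \ge \liminf_n\|y_n+y\|$ can be pushed above $1 + b - \delta$ for any $\delta$, giving $R(b,X) = 1+b$. Combining the two cases covers all $b > 0$.

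The subtle point — and where I expect the real work to be — is the $b < a_0$ case interaction with the constraint $\|y_n\| \le 1$: after scaling the sequence down by $b/a_0$ its terms have norm strictly less than $1$, which is \emph{permitted} but wastes room, so the naive bound only gives $\liminf\|y_n+y\| \approx b/a_0 + b$, and I need this to exceed $1 + b - \delta$, i.e. $b/a_0 \ge 1 - \delta$, which fails when $b \ll a_0$. So this crude scaling is not enough for small $b$, and the honest argument must instead \emph{renormalize} $y_n$ back to norm one: replace $y_n$ by $\hat y_n = \tfrac{a_0}{b} y_n = x_n$ — which just returns us to the start. The correct fix is to exploit that in the near-extremal configuration the sequence behaves ``like a scaled basis vector,'' so that one can simultaneously normalize the sequence to norm one and rescale $x$ to norm $b$ while preserving near-tightness of the triangle inequality; making this precise (e.g. via a direct estimate $\|x_n + tx\| \ge \big| \, \|x_n+x\| - (1-t)\|x\| \, \big|$ combined with $\|x_n+x\|\approx 1 + \|x\|$) is the crux, and once it is in place both cases $b \ge a_0$ and $b < a_0$ follow from the same computation.
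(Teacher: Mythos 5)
Your proposal is correct, but it reaches the conclusion by a genuinely different route from the paper. The paper's proof is functional-analytic: it takes norming functionals $x_n^*$ for $x_n+x$, uses separability to extract a weak* convergent subsequence $x_n^*\overset{*}{\rightharpoonup}x^*$, and from the near-extremality splits the estimate into two pieces, $\liminf_n x_n^*(x_n)>1-\eta$ and $x^*(x)>a(1-\eta)$, which then recombine to give $\liminf_n\|x_n+\tfrac{b}{a}x\|\geq \liminf_n x_n^*(x_n)+\tfrac{b}{a}x^*(x)>(1-\eta)(1+b)$ for every $b>0$ simultaneously. Your route is purely metric: the witnessing configuration is the same (keep $(x_n)$, rescale $x$ to $tx$ with $t\|x\|=b$, noting as you do that $\|x\|>a_0-\varepsilon$ is forced), but the lower bound comes from triangle-inequality/convexity estimates --- $\|x_n+tx\|\geq\|x_n+x\|-(1-t)\|x\|$ for $t\leq 1$ and $\|x_n+tx\|\geq t\|x_n+x\|-(t-1)\|x_n\|$ for $t\geq 1$ --- combined with $\liminf_n\|x_n+x\|>1+\|x\|-\varepsilon$. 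These do close the argument in both cases (giving $1+t\|x\|-\varepsilon$ and $1+t\|x\|-t\varepsilon$ respectively, with $t$ bounded), so the ``crux'' you flag at the end is in fact already resolved by the very inequality you write down; the detour in your $b<a_0$ discussion (scaling the whole configuration down and then renormalizing) is unnecessary and could be cut. What your approach buys is elementarity: no Hahn--Banach, no weak* sequential compactness, and in particular no use of separability, which the paper's proof genuinely relies on. What the paper's approach buys is a single uniform computation for all $b>0$ at once and a pair of intermediate facts (the functional nearly norms both $x_n$ and $x$ separately) that are reused elsewhere in arguments of this type.
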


\begin{proof}

This proof is based on part of the proof of corollary 4.3 of [12].

Assume there exist $a > 0$ such that $R(a, X) = 1 + a.$  Let $\eta \in (0, 1)$ then there exists 
\[ x \in X , \| x \| \leq a, x_n \rightharpoonup 0, \| x_n \| \leq 1 \mbox{ for all } n \mbox { and } \limsup_{n \rightarrow \infty} \left ( \limsup_{m \rightarrow \infty}\| x_n - x_m \| \right ) \leq 1 \]
such that 
\[ \liminf_{n \rightarrow \infty}\| x_n + x \| > 1 + a - \eta (a \wedge 1). \qquad \mbox{ \# } \]  

Let $x_n^* \in X^*, \| x_n^* \| = 1, x_n^*(x_n + x) = \| x_n + x \| \mbox{ for all } n.$

Because $X$ is separable the dual unit ball, $B_{X^*}$, is weak* sequentially compact so without loss of generality $x_n^*$ can be assumed to converge weak* to $x^* \in B_{X^*}.$

Now $x_n^*(x_n)  \leq \| x_n \| \leq 1, x^*(x) \leq \| x \| \leq a.$

From \#
\begin{align*}
\liminf_{n \rightarrow \infty} x_n^*(x_n + x)  & > 1 + a - \eta (a \wedge 1) \\
\liminf_{n \rightarrow \infty} x_n^*(x_n)   +  x^*(x) & > 1 + a - \eta (a \wedge 1)  \qquad \mbox{ \#\# }\\
\liminf_{n \rightarrow \infty} x_n^*(x_n) & > 1 + a - \eta (a \wedge 1) - x^*(x)\\
 & \geq  1 + a - \eta (a \wedge 1) - a \\
 & = 1 - \eta (a \wedge 1) \\
 & \geq 1 - \eta.
\end{align*}

Next, from \#\#
\begin{align*}
 x^*(x) & > 1 + a - \eta (a \wedge 1) - \liminf_{n \rightarrow \infty}x_n^*(x_n) \\
 & \geq 1 + a - \eta (a \wedge 1) - 1 \\
& = a - \eta (a \wedge 1) \\
& \geq a - a \eta \\
& = a(1 - \eta).
\end{align*}

Now let $b > 0$ then

\begin{align*}
\liminf_{n \rightarrow \infty} \left \| x_n + \frac{b}{a} x \right \| & \geq \liminf_{n \rightarrow \infty} x_n^*(x_n + \frac{b}{a} x) \\
& = \liminf_{n \rightarrow \infty} x_n^*(x_n) + \frac{a}{b} x^*(x) \\
& > 1 - \eta + \frac{b}{a} \times a (1 - \eta) \\
& = (1 - \eta)(1 + b).
\end{align*}

Note that $\left \| \frac{b}{a} x \right \| \leq \frac{b}{a} \times a = b$ so

\[ R(b, X) \geq \liminf_{n \rightarrow \infty} \left \| x_n + \frac{b}{a} x \right \|  > (1 - \eta)(1 + b). \]

Allowing $\eta \rightarrow 0$ gives $ R(b, X) \geq 1 + b.$

But $R(b, X) \leq 1 + b \mbox{ for all } b  > 0.$  Therefore $R(b, X) = 1 + b.$

\end{proof}

\begin{corollary}

Let $X$ be a separable Banach space then there exist $a > 0$ such that $R(a, X) < 1 + a$ if and only if  $R(b, X) < 1 + b$ for all $b > 0.$
\end{corollary}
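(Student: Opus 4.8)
The plan is to obtain the Corollary for free from the Proposition by negating both sides of its biconditional. The one point to nail down first is that the inequality $R(a, X) < 1 + a$ is genuinely the negation of the equality $R(a, X) = 1 + a$; this is immediate from the a priori bound $R(a, X) \leq 1 + a$ recorded at the start of this section. Writing $P(a)$ for the statement ``$R(a, X) = 1 + a$'', the Proposition then says $\bigl(\exists a > 0 : P(a)\bigr) \Longleftrightarrow \bigl(\forall b > 0 : P(b)\bigr)$.

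Next I would use that $A \Longleftrightarrow B$ is logically the same as $\neg A \Longleftrightarrow \neg B$, together with the usual interchange of $\exists$ and $\forall$ under negation. This turns the Proposition into $\bigl(\forall a > 0 : \neg P(a)\bigr) \Longleftrightarrow \bigl(\exists b > 0 : \neg P(b)\bigr)$, that is, $\bigl(\forall a > 0 : R(a, X) < 1 + a\bigr) \Longleftrightarrow \bigl(\exists b > 0 : R(b, X) < 1 + b\bigr)$. Since a biconditional is symmetric, this is exactly the statement of the Corollary after renaming the bound variables, so the written proof amounts to a single sentence invoking the Proposition.

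Because the whole argument is a purely formal manipulation of quantifiers, I do not expect any real obstacle; the only thing that could go wrong is overlooking that the bound $R(a, X) \leq 1 + a$ is what lets one identify ``$<$'' with ``$\neq$'' here, and that bound is already established.
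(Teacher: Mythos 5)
Your proposal is correct and is essentially the paper's own argument: the paper also derives the corollary purely from Proposition 3.1, phrasing the negation step as a proof by contradiction (assume some $b$ has $R(b,X)=1+b$, apply the proposition, contradict $R(a,X)<1+a$), which is the same quantifier-negation you carry out formally. Your explicit remark that the a priori bound $R(a,X)\leq 1+a$ is what identifies ``$<$'' with ``$\neq$'' is a point the paper uses only implicitly.
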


\begin{proof}

Only one of the implications needs to be considered.  Assume there exist $a > 0$ such that $R(a, X) < 1 + a$ and also assume there exists a $b > 0$ such that  $R(b, X) = 1 + b.$  Then by proposition 3.1 $R(a, X) = 1 + a.$  This is a contradiction.

\end{proof}

\begin{corollary}

Let $X$ be a separable Banach space then the following are equivalent.

\begin{enumerate}

\item[(a)] There exists $a > 0$ such that $R(a, X) < 1 + a$;

\item[(b)] $R(a, X) < 1 + a$ for all $a > 0$;

\item[(c)] $R(1, X) < 2.$

\end{enumerate}

\end{corollary}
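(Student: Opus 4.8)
The plan is to close the cycle (a) $\Rightarrow$ (b) $\Rightarrow$ (c) $\Rightarrow$ (a), with each arrow being either trivial or already established. The only real content — (a) $\Rightarrow$ (b) — is precisely Corollary 3.2, which in turn is just the contrapositive reformulation of Proposition 3.1: if some $a>0$ has $R(a,X)<1+a$ but some $b>0$ had $R(b,X)=1+b$, then Proposition 3.1 would force $R(a,X)=1+a$, a contradiction, so in fact $R(b,X)<1+b$ for every $b>0$. So I would simply cite Corollary 3.2 for this implication rather than rerun the argument.

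For (b) $\Rightarrow$ (c) there is nothing to do: (c) is literally the instance $a=1$ of (b), since $1+1=2$. For (c) $\Rightarrow$ (a), I would take $a=1$; because $1>0$, the hypothesis $R(1,X)<2=1+1$ is exactly an instance of the existential statement in (a). Chaining these three gives that (a), (b), (c) are pairwise equivalent.

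I do not expect a genuine obstacle here — all the substance lives in Proposition 3.1 (which uses separability via weak* sequential compactness of $B_{X^*}$, and the rescaling of the extremal vector $x$ by the factor $b/a$). The only thing to be careful about is bookkeeping: one must make sure the implications actually form a cycle through all three statements, so that the three conditions are mutually equivalent and not merely linearly ordered; the route (a) $\Rightarrow$ (b) $\Rightarrow$ (c) $\Rightarrow$ (a) handles this cleanly.
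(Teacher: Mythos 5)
Your proposal is correct and matches the paper's intent: the paper states this corollary without proof immediately after Corollary 3.2, leaving exactly the cycle you describe, with all the substance residing in Proposition 3.1 via Corollary 3.2 and the remaining implications being trivial specializations to $a=1$. Nothing is missing.
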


{\bf Remarks}

\begin{enumerate}

\item[1.] The equivalence of (b) and (c) was mentioned by Prus in his talk at the 12th International Conference on Fixed Point Theory and Its Applications [25].

\item[2.] It would appear that the introduction of the parameter $a$ does not create a great advantage in fixed point theory.  The real advantage is using $D[(x_n)]$ in the definition of $R(a, X).$

\end{enumerate}

\begin{corollary}

Let $X$ be a separable Banach space then the following are equivalent.

\begin{enumerate}

\item[(a)] There exists $a > 0$ such that $R(a, X) = 1 + a$;

\item[(b)] $R(a, X) = 1 + a$ for all $a > 0$;

\item[(c)] $R(1, X) = 2.$

\end{enumerate}

\end{corollary}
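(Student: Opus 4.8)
The plan is to derive everything formally from Proposition 3.1 and the elementary bound $R(a, X) \leq 1 + a$ established at the start of this section; I expect no genuine obstacle, since the substantive duality/separability argument has already been carried out in the proof of Proposition 3.1, and this corollary is a purely logical consequence. The one point requiring a little care is the degenerate status of the value $a = 1$ in clause (c).

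First I would observe that the implication (a) $\Leftrightarrow$ (b) is literally the statement of Proposition 3.1: $R(a, X) = 1 + a$ for some $a > 0$ precisely when $R(b, X) = 1 + b$ for all $b > 0$. Next, (b) $\Rightarrow$ (c) follows by specialising $b = 1$ and using $1 + 1 = 2$. Finally, for (c) $\Rightarrow$ (a) I would take $a = 1$: the hypothesis $R(1, X) = 2 = 1 + 1$ exhibits a positive value of the parameter (namely $a = 1$) at which $R(a, X) = 1 + a$, so (a) holds. This closes the cycle.

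An alternative, and arguably cleaner, route is to note that since $R(a, X) \leq 1 + a$ for every $a > 0$ and $R(1, X) \leq 2$, each of (a), (b), (c) here is exactly the negation of one of the three equivalent statements in Corollary 3.4: the negation of ``$\exists a>0: R(a,X) < 1+a$'' is statement (b) above, the negation of ``$\forall a>0: R(a,X) < 1+a$'' is statement (a) above, and the negation of ``$R(1,X) < 2$'' is statement (c) above. Negating an equivalence yields an equivalence, so (a), (b) and (c) are equivalent. I would present the first, direct argument in the paper and perhaps mention this second viewpoint as a remark, since it makes transparent why Corollary 3.5 is simply the ``boundary case'' companion of Corollary 3.4.
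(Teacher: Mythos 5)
Your proposal is correct and matches the paper's (implicit) intent: the paper states this corollary without proof as an immediate consequence of Proposition 3.1, and your direct argument --- (a) $\Leftrightarrow$ (b) by Proposition 3.1, (b) $\Rightarrow$ (c) by taking $a = 1$, and (c) $\Rightarrow$ (a) since $a = 1$ is a witness --- is exactly that consequence, with the negation-of-the-strict-inequality corollary giving a valid second route. The only slip is a reference off by one: the strict-inequality equivalence you invoke is Corollary 3.3 in the paper's numbering, and the statement under discussion is Corollary 3.4.
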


\begin{corollary}

Let $X$ be a separable Banach space then either 
\[R(a, X) = 1 + a \mbox{ for all } a > 0 \mbox{ or } R(a, X) < 1 + a \mbox{ for all } a > 0.\]

\end{corollary}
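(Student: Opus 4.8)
The plan is to read this off Proposition 3.1 with essentially no further work, since the preceding results already contain all the substance. First I would recall the observation made at the start of Section 3 that $R(a, X) \le 1 + a$ for every $a > 0$. Consequently, for each fixed $a > 0$ exactly one of the two statements ``$R(a, X) = 1 + a$'' and ``$R(a, X) < 1 + a$'' holds; the only thing left to prove is that which of the two occurs does not depend on $a$.

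Next I would argue by cases on whether the set $S = \{\, a > 0 : R(a, X) = 1 + a \,\}$ is empty. If $S = \emptyset$, then by the trichotomy-free dichotomy just noted we have $R(a, X) < 1 + a$ for all $a > 0$, which is the second alternative in the statement. If $S \neq \emptyset$, choose any $a_0 \in S$, so that $R(a_0, X) = 1 + a_0$; Proposition 3.1 then immediately yields $R(b, X) = 1 + b$ for all $b > 0$, which is the first alternative. (Equivalently, one may simply invoke Corollary 3.4, whose equivalence of (a) and (b) says precisely that the existence of one $a$ with $R(a,X)=1+a$ forces $R(b,X)=1+b$ for all $b$, together with Corollary 3.2 for the complementary strict case.) Either way the asserted dichotomy holds.

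I do not expect any real obstacle here: Proposition 3.1 carries the entire load, and this corollary is just its case-analysis repackaging. The only point requiring any care is the bookkeeping that the two alternatives are genuinely exhaustive for each individual $a$, and that is guaranteed by the universal upper bound $R(a, X) \le 1 + a$.
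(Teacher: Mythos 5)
Your proposal is correct and follows essentially the same route the paper intends: the corollary is an immediate consequence of Proposition 3.1 (equivalently Corollaries 3.2--3.4) together with the universal bound $R(a,X)\leq 1+a$, and the paper indeed states it without further proof for exactly this reason. Your case analysis on whether some $a>0$ satisfies $R(a,X)=1+a$ is the right bookkeeping and nothing is missing.
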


\begin{corollary}

Let $X$ be a separable Banach space then either 
\[R(a, X) = 1 + a \mbox{ for all } a > 0 \mbox{ or } R(1, X) < 2.\]

\end{corollary}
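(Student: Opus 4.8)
The plan is to obtain this statement as an immediate consequence of the dichotomy already established, so no new weak-null-sequence construction is needed. First I would record the elementary observation that the case split on $R(1,X)$ is exhaustive: from the basic bounds $1 \vee a \le R(a,X) \le 1+a$, specialising to $a = 1$ gives $R(1,X) \le 2$, hence exactly one of $R(1,X) = 2$ or $R(1,X) < 2$ holds. If $R(1,X) < 2$, we are already in the second alternative of the claimed dichotomy and there is nothing further to prove.

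So I would concentrate on the remaining case $R(1,X) = 2$. Here I would apply Proposition 3.1 with the choice $a = 1$: since $R(1,X) = 1 + 1$, there exists $a > 0$ (namely $a = 1$) with $R(a,X) = 1 + a$, and Proposition 3.1 upgrades this to $R(b,X) = 1 + b$ for every $b > 0$. This places us in the first alternative. Combining the two cases yields the stated ``either/or''. Equivalently, one could invoke Corollary 3.5 directly: its second alternative, $R(a,X) < 1+a$ for all $a > 0$, gives in particular $R(1,X) < 2$ on taking $a = 1$, so Corollary 3.5 already implies the present corollary verbatim.

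I do not expect any genuine obstacle; the only point requiring a moment's care is ensuring the case analysis on $R(1,X)$ is complete, which is exactly what the upper bound $R(1,X) \le 2$ guarantees. It is worth remarking in passing that this corollary is a weakened, more ``economical'' restatement of Corollary 3.5 (and of the equivalence of (b) and (c) in Corollary 3.4), trading the full two-sided dichotomy for a single scalar test at the parameter value $a = 1$.
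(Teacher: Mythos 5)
Your argument is correct and matches the paper's (implicit) route: the corollary is stated there without proof as an immediate consequence of Proposition 3.1 and Corollary 3.5, and your case split on $R(1,X)=2$ versus $R(1,X)<2$, using the bound $R(1,X)\leq 2$ to make the dichotomy exhaustive, is exactly that deduction. No gaps.
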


So if $R(a, X)$ takes on its largest possible value then some interesting conclusions can be drawn.  At the other end of the scale, when $R(a, X)$ takes on one of its possible minimum values, $a$, there is also an interesting result.  The case when the $Rs$ equal 1 when $a = 1$ is explored further in a later section.

Another point of interest is the rate of increase of $R(a, X).$  The following proposition looks into that and then the consequences are investigated.

\begin{proposition}

Let $X$ be a separable Banach space then $0 < a \leq b$ implies $a R(b, X) \leq b R(a, X).$ 

\end{proposition}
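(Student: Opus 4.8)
The plan is to exploit the positive homogeneity built into the definition of $R(a,X)$. The inequality $a\,R(b,X)\le b\,R(a,X)$ is precisely the assertion that $t\mapsto R(t,X)/t$ is non‑increasing on $(0,\infty)$, and the natural way to see this is to turn a competitor for $R(b,X)$ into a competitor for $R(a,X)$ by multiplying every vector in sight by the factor $a/b\le 1$.

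Concretely, fix $0<a\le b$ and let $x\in X$ together with $(x_n)$ be admissible for $R(b,X)$, i.e. $\|x\|\le b$, $\|x_n\|\le 1$ for all $n$, $x_n\rightharpoonup 0$, and $D[(x_n)]\le 1$. Set $y=\tfrac{a}{b}x$ and $y_n=\tfrac{a}{b}x_n$. Then $\|y\|=\tfrac{a}{b}\|x\|\le a$; since $a\le b$ we get $\|y_n\|=\tfrac{a}{b}\|x_n\|\le\tfrac{a}{b}\le 1$; clearly $y_n\rightharpoonup 0$; and, because $D[\,\cdot\,]$ is positively homogeneous of degree one, $D[(y_n)]=\tfrac{a}{b}D[(x_n)]\le\tfrac{a}{b}\le 1$. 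Hence $(y,(y_n))$ is admissible for $R(a,X)$, so
\[ R(a,X)\ \ge\ \liminf_{n\to\infty}\|y_n+y\|\ =\ \frac{a}{b}\,\liminf_{n\to\infty}\|x_n+x\|. \]
Taking the supremum over all competitors $(x,(x_n))$ for $R(b,X)$ yields $R(a,X)\ge\tfrac{a}{b}R(b,X)$, which rearranges to $a\,R(b,X)\le b\,R(a,X)$.

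There is no serious obstacle here: the only place the hypothesis $a\le b$ (rather than merely $a,b>0$) is used is the verification that $\|y_n\|\le 1$, and the only point worth double‑checking is that the auxiliary condition $D[(\,\cdot\,)]\le 1$ appearing in the definition of $R(a,X)$ also scales correctly, which it does. Separability plays no role in this argument. One could alternatively run the computation with a near‑optimal sequence for $R(b,X)$ and let the error tend to $0$, but working directly with the supremum is cleaner. Finally, note that combined with the monotonicity $R(a,X)\le R(b,X)$ recorded before Proposition 3.1, this gives the two‑sided estimate $R(a,X)\le R(b,X)\le\tfrac{b}{a}R(a,X)$ for $0<a\le b$.
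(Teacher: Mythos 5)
Your argument is correct and is essentially the paper's own proof: both rescale an admissible pair for $R(b,X)$ by the factor $a/b$, check that all the defining conditions (including the $D[(\cdot)]\le 1$ constraint) are preserved, and take the supremum. No issues.
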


\begin{proof}

Let $X$ be a separable Banach space and $a \mbox{ and } b$ be real numbers such that $0 < a \leq b.$

Consider
\[ x \in X , \| x \| \leq b, x_n \rightharpoonup 0, \| x_n \| \leq 1 \mbox{ for all } n \mbox { and } \limsup_{n \rightarrow \infty} \left ( \limsup_{m \rightarrow \infty}\| x_n - x_m \| \right ) \leq 1. \]

Then
\[ \left \| \frac{a}{b} x \right \| \leq \frac{ a }{ b } \times b = a,  \frac{ a }{ b } x_n \rightharpoonup 0, \left \| \frac{ a }{ b } x_n  \right \| \leq \frac{ a }{ b } \leq 1 \mbox{ for all } n \mbox { and }\]

\[ \limsup_{n \rightarrow \infty} \left ( \limsup_{m \rightarrow \infty} \left \| \frac{ a }{ b } x_n - \frac{ a }{ b } x_m \right \| \right ) = \frac{ a }{ b } \limsup_{n \rightarrow \infty} \left ( \limsup_{m \rightarrow \infty} \left \| x_n - x_m \right \|\right ) \leq \frac{ a }{ b } \leq 1. \]

Thus
\begin{align*}
\liminf_{n \rightarrow \infty} \| x_n + x \| & = \frac{ b }{ a } \liminf_{n \rightarrow \infty}\left \| \frac{ a }{ b } x_n + \frac{ a }{ b } x \right \| \\
& \leq \frac{ b }{ a } R(a, X).
\end{align*}

Therefore $R(b, X) \leq \frac{b}{a} R(a, X).$
 
\end{proof}

This means that if $0 < a \leq b$ then

\[ \frac{ R(b, X) }{ b } \leq \frac{ R(a, X) }{ a }. \]

In other words $\frac{ R(a, X) }{ a }$ is nonincreasing.  The consequences of this property deserves further investigation.

\begin{corollary}

Let $X$ be a separable Banach space then if there exists $a > 0$ such that $R(a, X) = a$ then $R(b, X) = b$ for all $b \geq a.$

\end{corollary}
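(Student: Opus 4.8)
The plan is to read off the conclusion from the monotonicity of $R(a,X)/a$ established in Proposition 3.7, together with the elementary lower bound $1\vee a\le R(a,X)$ recorded at the start of this section. The only slightly non-obvious preliminary point is that the hypothesis already forces $a\ge 1$, and I would dispose of that first.

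First I would observe that, since $R(a,X)\ge 1\vee a$, the assumption $R(a,X)=a$ gives $a\ge 1\vee a$, hence $a\ge 1$. Consequently every $b\ge a$ also satisfies $b\ge 1$, so the lower bound for those $b$ reads $R(b,X)\ge 1\vee b=b$.

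Next I would fix $b\ge a$ and invoke Proposition 3.7 with the pair $0<a\le b$: it yields $a\,R(b,X)\le b\,R(a,X)$, i.e.
\[
R(b,X)\le \frac{b}{a}\,R(a,X)=\frac{b}{a}\cdot a=b .
\]
Combining this upper bound with the lower bound $R(b,X)\ge b$ from the previous step gives $R(b,X)=b$, as required.

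I expect no real obstacle here; the statement is a direct corollary of Proposition 3.7, and the only thing one must not overlook is checking $a\ge 1$ so that the trivial estimate $R(b,X)\ge 1\vee b$ actually equals $b$ for the relevant range of $b$. (One could equivalently phrase the whole argument as: $R(\cdot,X)/\cdot$ is nonincreasing and is bounded below by $1$ at and beyond any point where it attains the value $1$, so it is identically $1$ from that point on.)
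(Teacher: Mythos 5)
Your proposal is correct and follows essentially the same route as the paper: apply Proposition 3.7 to get $R(b,X)\le \frac{b}{a}R(a,X)=b$ and combine with the lower bound $R(b,X)\ge b$. The preliminary observation that $a\ge 1$ is harmless but not actually needed, since $R(b,X)\ge 1\vee b\ge b$ holds for every $b>0$ anyway, which is all the paper uses.
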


\begin{proof}

 Assume there exists an $a > 0$ such that $R(a, X) = a.$  Let $b \geq a.$  Then by proposition 3.7 $R(b, X) \leq \frac{ b }{ a } R(a, X) = \frac{ b }{ a } \times a = b$ but $R(b, X) \geq b \mbox{ for all } b > 0.$

So $R(b, X) = b.$

\end{proof}

\begin{corollary}

Let $X$ be a separable Banach space then $R(a, X) = a \mbox{ for all } \newline a \geq 1 \mbox{ if and only if } R(1, X) = 1.$

\end{corollary}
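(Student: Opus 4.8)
The plan is to treat the two implications separately, with essentially all the work already done by the earlier results. For the forward direction there is nothing to prove: if $R(a, X) = a$ holds for every $a \geq 1$, then it holds in particular at $a = 1$, so $R(1, X) = 1$.

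For the converse, I would start from the hypothesis $R(1, X) = 1$ and simply note that this exhibits a value $a > 0$ (namely $a = 1$) for which $R(a, X) = a$. That is precisely the hypothesis of Corollary 3.9, so applying that corollary with $a = 1$ yields $R(b, X) = b$ for all $b \geq 1$, which is the desired conclusion. The only thing to check is that $1$ is a legitimate witness for Corollary 3.9, i.e. that $1 > 0$, so there is no real obstacle here; the statement is a direct specialization of the monotonicity of $R(a,X)/a$ established in Proposition 3.7 and recorded in Corollary 3.9.

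\begin{proof}

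If $R(a, X) = a$ for all $a \geq 1$, then taking $a = 1$ gives $R(1, X) = 1$.

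Conversely, assume $R(1, X) = 1$. Since $1 > 0$, this means there exists $a > 0$ (namely $a = 1$) with $R(a, X) = a$, so by corollary 3.9, $R(b, X) = b$ for all $b \geq 1$.

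\end{proof}
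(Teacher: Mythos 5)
Your proof is correct and follows essentially the same route as the paper: the paper applies Proposition 3.7 directly with the pair $(1, a)$ together with the lower bound $R(a, X) \geq a$, while you invoke the immediately preceding corollary (which is Corollary 3.8 in the paper's numbering, not 3.9 --- 3.9 is the statement being proved), itself derived from Proposition 3.7 in exactly that way. The only thing to fix is that reference, since as literally written the citation is circular.
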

 
\begin{proof}
 
Only one implication needs to be considered.  Let $a \geq 1$ then proposition 3.7 means $1 \times R(a, X) \leq a \times R(1, X) = a$ but $R(a, X) \geq a \mbox{ for all } a > 0.$

So $R(a, X) =a.$

\end{proof}

\section {Properties of $RW(a, X)$}

It is straightforward to see that
\[ 1 \vee a \leq RW(a, X) \leq 1 + a \]
and
\[ \mbox{ if } 0 \leq a \leq b \mbox{ then } RW(a, X) \leq RW(b, X). \]

$RW(a, X)$ is closely related to $\Gamma_X (t)$ and $\Gamma_X' (t)$ in reflexive Banach spaces.

The modulus of nearly uniform smoothness is

\[ \Gamma_X (t) = \sup \left\{ \inf_{n > 1} \left( \frac{ \| x_1 + t x_n \| + \| x_1 - t x_n \| }{ 2 } - 1 \right) \right\}, \]

where the supremum is taken over all basic sequences $(x_n)$ in $B_X.$

If $X$ is reflexive then

\[ \Gamma_X (t) = \sup \left\{ \inf_{n > 1} \left( \frac{ \| x_1 + t x_n \| + \| x_1 - t x_n \| }{ 2 } - 1 \right): (x_n) \mbox{ in } B_X, x_n \rightharpoonup 0 \right\}. \]

The definition of $\Gamma_X'(t)$ is

 \[ \Gamma_X' (0) = \lim_{t \rightarrow 0 ^+} \frac{ \Gamma_X(t) }{ t }. \]
 
In [21] and [12] it was shown that, in a reflexive Banach space, there exists $a > 0$ such that $RW(a, X) < 1 + a$ if and only $\Gamma_X '(0)  < 1$ if and only if there exists $t  > 0$ such that $\Gamma_X (t) < t.$

The first condition on $RW(a, X)$ will be looked at below.

Not surprisingly these properties are similar to those of $R(a, X)$ and the proofs are similar.  To prevent boredom a slightly different approach is taken.

\begin{proposition}

Let $X$ be a separable Banach space then the following are equivalent.

\begin{enumerate}

\item[(a)] $RW(a, X) < 1 + a$ for all $a > 0$;

\item[(b)] There exists $a > 0$ such that $RW(a, X) < 1 + a$;

\item[(c)] $RW(1, X) < 2.$

\end{enumerate}

\end{proposition}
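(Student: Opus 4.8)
The plan is to deduce the whole equivalence from a single ``rigidity'' fact, proved by the same weak$^{*}$-limit argument used in Proposition 3.1; the only genuinely new feature is that the infimum ``$\wedge$'' in the definition of $RW(a,X)$ obliges us to carry \emph{two} sequences of norming functionals instead of one. First I would record the cheap implications: (a)$\,\Rightarrow\,$(b) is immediate; (a)$\,\Rightarrow\,$(c) is just the instance $a=1$ (using $1+1=2$); and (c)$\,\Rightarrow\,$(b) is again the instance $a=1$. Hence it suffices to prove (b)$\,\Rightarrow\,$(a), and I would do so by contraposition, establishing that \emph{if there is some $b_{0}>0$ with $RW(b_{0},X)=1+b_{0}$, then $RW(a,X)=1+a$ for every $a>0$}. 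This not only contradicts (b) but simultaneously contradicts (c), so the complete three-way equivalence comes out in one shot. This is the ``slightly different approach'': a direct cyclic argument resting on one rigidity lemma, rather than the proposition-plus-corollaries route used for $R(a,X)$.

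To prove the rigidity fact, fix $\eta\in(0,1)$ and use $RW(b_{0},X)=1+b_{0}$ together with the definition of the supremum to pick $x\in X$ with $\|x\|\le b_{0}$ and $x_{n}\rightharpoonup 0$ with $\|x_{n}\|\le 1$ such that
\[ \liminf_{n\rightarrow\infty}\|x_{n}+x\|\wedge\liminf_{n\rightarrow\infty}\|x_{n}-x\| > 1+b_{0}-\eta(b_{0}\wedge 1); \]
the role of the ``$\wedge$'' is precisely that \emph{both} liminfs then exceed the right-hand side. Choose $x_{n}^{*},y_{n}^{*}\in X^{*}$ with $\|x_{n}^{*}\|=\|y_{n}^{*}\|=1$, $x_{n}^{*}(x_{n}+x)=\|x_{n}+x\|$ and $y_{n}^{*}(x_{n}-x)=\|x_{n}-x\|$. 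Since $X$ is separable, $B_{X^{*}}$ is weak$^{*}$ sequentially compact, so after passing to a subsequence we may assume $x_{n}^{*}$ and $y_{n}^{*}$ converge weak$^{*}$ to some $x^{*},y^{*}$ with $\|x^{*}\|,\|y^{*}\|\le 1$. Because $x_{n}^{*}(x)\to x^{*}(x)$ and $y_{n}^{*}(x)\to y^{*}(x)$, while $x_{n}^{*}(x_{n}),y_{n}^{*}(x_{n})\le 1$ and $x^{*}(x),-y^{*}(x)\le b_{0}$, the same two-sided estimate as in Proposition 3.1 yields
\[ \liminf_{n\rightarrow\infty}x_{n}^{*}(x_{n})>1-\eta,\quad x^{*}(x)>b_{0}(1-\eta),\quad \liminf_{n\rightarrow\infty}y_{n}^{*}(x_{n})>1-\eta,\quad -y^{*}(x)>b_{0}(1-\eta). \]

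Now fix an arbitrary $a>0$. There is no $D[(x_{n})]$ constraint to worry about here, so I keep the \emph{same} sequence $(x_{n})$ and only rescale the fixed vector, testing $(x_{n})$ against $\tfrac{a}{b_{0}}x$, which has norm $\le a$. Pairing the ``$+$'' side with $x_{n}^{*}$ and the ``$-$'' side with $y_{n}^{*}$,
\[ \liminf_{n\rightarrow\infty}\Big\|x_{n}+\tfrac{a}{b_{0}}x\Big\| \ge \liminf_{n\rightarrow\infty}x_{n}^{*}(x_{n})+\tfrac{a}{b_{0}}x^{*}(x) > (1-\eta)(1+a), \]
and symmetrically $\liminf_{n\rightarrow\infty}\|x_{n}-\tfrac{a}{b_{0}}x\| > (1-\eta)(1+a)$, whence $RW(a,X)\ge(1-\eta)(1+a)$; letting $\eta\rightarrow 0^{+}$ and recalling $RW(a,X)\le 1+a$ gives $RW(a,X)=1+a$. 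The main obstacle — modest, but the one place this genuinely departs from the $R(a,X)$ computation — is the simultaneous bookkeeping of the two functional sequences, in particular checking that the ``$+$'' and ``$-$'' estimates emerge symmetric after the rescaling; everything else is a transcription of the proof of Proposition 3.1.
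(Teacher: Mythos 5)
Your proposal is correct and follows essentially the same route as the paper: both reduce (b)$\Rightarrow$(a) and (c)$\Rightarrow$(a) to the rigidity statement that $RW(b_0,X)=1+b_0$ for one $b_0>0$ forces $RW(a,X)=1+a$ for all $a>0$. The only difference is that the paper outsources that rigidity fact to the proof of corollary 4.3 of [12], whereas you prove it in full by the two-functional adaptation of Proposition 3.1, and your bookkeeping of the $+$ and $-$ sides is accurate.
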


\begin{proof}

Both (a) $\Rightarrow$ (b) and (a) $\Rightarrow$ (c) are straightforward.

$\bf{(b) \Rightarrow (a):}$ Assume that (a) does not hold then there exists $b > 0$ such that $RW(b, X) = 1 + b.$  Then $RW(c, X) = 1 + c \mbox{ for all } c > 0.$  This result formed part of the proof of corollary 4.3 of [12].  So there is a contradiction.

$\bf{(c) \Rightarrow (a):}$  This is proved in a similar fashion.

\end{proof}

Another point of interest is the rate of increase of $RW(a, X)$ and not surprisingly it is the same as for $R(a, X). $

\begin{proposition}

Let $X$ be a separable Banach space then $0 < a \leq b$ implies $a RW(b, X) \leq b RW(a, X).$ 

\end{proposition}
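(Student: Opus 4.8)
The plan is to mimic the scaling argument from the proof of Proposition 3.7, which is in fact a little simpler here since the definition of $RW(a, X)$ carries no constraint involving $D[(x_n)]$. Fix $0 < a \leq b$ and take an arbitrary competitor for $RW(b, X)$: an $x \in X$ with $\|x\| \leq b$ and a sequence $(x_n)$ with $x_n \rightharpoonup 0$ and $\|x_n\| \leq 1$ for all $n$. Rescale everything by the factor $\frac{a}{b} \in (0, 1]$, setting $y = \frac{a}{b} x$ and $y_n = \frac{a}{b} x_n$. Then $\|y\| \leq \frac{a}{b} \times b = a$, $y_n \rightharpoonup 0$, and $\|y_n\| \leq \frac{a}{b} \leq 1$ for all $n$, so $y$ together with $(y_n)$ is an admissible competitor for $RW(a, X)$.

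Next I would pull the positive scalar through the defining expression. Since $\frac{a}{b} > 0$, positive homogeneity of $\liminf$ gives $\liminf_{n \rightarrow \infty}\|y_n \pm y\| = \frac{a}{b}\liminf_{n \rightarrow \infty}\|x_n \pm x\|$, and since multiplication by a nonnegative scalar commutes with $\wedge$,
\[ \liminf_{n \rightarrow \infty}\|y_n + y\| \wedge \liminf_{n \rightarrow \infty}\|y_n - y\| = \frac{a}{b}\left( \liminf_{n \rightarrow \infty}\|x_n + x\| \wedge \liminf_{n \rightarrow \infty}\|x_n - x\| \right). \]
By the definition of $RW(a, X)$ the left-hand side is at most $RW(a, X)$, so
\[ \liminf_{n \rightarrow \infty}\|x_n + x\| \wedge \liminf_{n \rightarrow \infty}\|x_n - x\| \leq \frac{b}{a} RW(a, X). \]

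Finally, taking the supremum over all admissible $x$ and $(x_n)$ yields $RW(b, X) \leq \frac{b}{a} RW(a, X)$, that is, $a RW(b, X) \leq b RW(a, X)$. There is no genuine obstacle here: the only thing worth checking carefully is the elementary identity $\alpha(s \wedge t) = (\alpha s) \wedge (\alpha t)$ for $\alpha \geq 0$, used in the displayed equality, together with the fact that the rescaled sequence still lies in $B_X$ and remains weakly null — both immediate. As with Proposition 3.7, the conclusion says precisely that $RW(a, X)/a$ is nonincreasing in $a$, and the analogues of Corollaries 3.8 and 3.9 should follow by the same reasoning.
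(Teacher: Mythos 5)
Your proof is correct and follows essentially the same route as the paper: rescale an arbitrary competitor for $RW(b, X)$ by the factor $\frac{a}{b}$, check it is admissible for $RW(a, X)$, and pull the positive scalar through the $\liminf$ and the minimum. The only difference is that you make explicit the identity $\alpha(s \wedge t) = (\alpha s) \wedge (\alpha t)$ for $\alpha \geq 0$, which the paper uses silently.
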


\begin{proof}

Consider $x \in X , \| x \| \leq b, x_n \rightharpoonup 0, \| x_n \| \leq 1 \mbox{ for all } n.$

Note that $\left \| \frac{a}{b} x \right \| \leq \frac{a}{b} \times b \leq a, \frac{a}{b} x_n \rightharpoonup 0, \| \frac{a}{b} x_n \| \leq \frac{a}{b} \leq 1 \mbox{ for all } n.$

\begin{align*}
\liminf_{n \rightarrow \infty}\| x_n + x \| \wedge \liminf_{n \rightarrow \infty}\| x_n - x \| & = \frac{ b }{ a } \left (\liminf_{n \rightarrow \infty}\| \frac{ a }{ b } x_n + \frac{ a }{ b } x \| \wedge \liminf_{n \rightarrow \infty}\| \frac{ a }{ b } x_n - \frac{ a }{ b } x \| \right) \\
& \leq \frac{ b }{ a } RW(a, X).
\end{align*}

Therefore $RW(b, X) \leq \frac{ b }{ a } RW(a, X).$

\end{proof}

This means that if $0 < a \leq b$ then

\[ \frac{ RW(b, X) }{ b } \leq \frac{ RW(a, X) }{ a }. \]

In other words $\frac{ RW(a, X) }{ a }$ is nonincreasing. 

There is a tantalising similarity to a property of $\Gamma_X (t).$  In [12] it was shown that $\frac{ \Gamma_X (t)}{ t}$ is nondecreasing.  These properties of $RW(a, X)$ and $\Gamma_X (t)$ combined with the two inequalities linking them in theorem 4.1 of [12] provide fertile ground for further research.

Now for some consequences of proposition 4.2.

\begin{corollary}

Let $X$ be a separable Banach space then if there exists $a > 0$ such that $RW(a, X) = a$ then $RW(b, X) = b$ for all $b \geq a.$

\end{corollary}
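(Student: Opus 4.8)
The plan is to follow the pattern of Corollary 3.9 verbatim, simply replacing Proposition 3.7 by its $RW$-counterpart, Proposition 4.2. First I would assume there exists $a > 0$ with $RW(a, X) = a$ and fix an arbitrary $b \geq a$. Since $0 < a \leq b$, Proposition 4.2 applies and gives $a\, RW(b, X) \leq b\, RW(a, X)$. Substituting $RW(a, X) = a$ and dividing through by $a > 0$ yields $RW(b, X) \leq b$.

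For the reverse inequality I would invoke the elementary bound recorded at the start of this section, namely $1 \vee b \leq RW(b, X)$, which in particular gives $RW(b, X) \geq b$ for every $b > 0$. Combining the two inequalities gives $RW(b, X) = b$, as claimed. One may also note in passing that the hypothesis already forces $a \geq 1$, since $1 \vee a \leq RW(a, X) = a$; hence $b \geq a \geq 1$ throughout, although this observation is not needed for the argument.

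There is essentially no obstacle here: all the real content has been absorbed into Proposition 4.2 (the statement that $RW(a,X)/a$ is nonincreasing), and the present corollary is a one-line deduction from it together with the trivial lower bound $RW(b,X) \geq b$. The only point requiring any care is to apply the monotonicity in the correct direction — passing from the smaller parameter $a$ to the larger parameter $b$ — which is precisely the inequality furnished by Proposition 4.2.
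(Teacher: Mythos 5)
Your argument is exactly the paper's: the paper omits the proof, stating it is identical to that of Corollary 3.8, which applies the monotonicity inequality of the relevant proposition (here Proposition 4.2) to get $RW(b,X) \leq \frac{b}{a}RW(a,X) = b$ and then combines this with the trivial lower bound $RW(b,X) \geq b$. Your proposal is correct and matches this approach.
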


The proof is identical to the proof of corollary 3.8 and so it will be omitted.

\begin{corollary}

Let $X$ be a separable Banach space then $RW(a, X) = a \mbox{ for all } \newline a \geq 1 \mbox{ if and only if } RW(1, X) = 1.$

\end{corollary}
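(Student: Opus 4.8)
The plan is to imitate the proof of Corollary 3.9, replacing Proposition 3.7 by its counterpart for $RW$, namely Proposition 4.2. As in that corollary, only one implication requires attention: the forward implication follows at once by taking $a = 1$ in the assertion ``$RW(a, X) = a$ for all $a \geq 1$''.

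For the reverse implication, I would suppose $RW(1, X) = 1$ and fix an arbitrary $a \geq 1$. Proposition 4.2 applied to the pair $1 \leq a$ yields $1 \cdot RW(a, X) \leq a \cdot RW(1, X) = a$, so $RW(a, X) \leq a$. On the other hand, the elementary estimate $1 \vee a \leq RW(a, X)$ recorded at the start of this section gives $RW(a, X) \geq a$ once $a \geq 1$. Squeezing these two bounds forces $RW(a, X) = a$, and since $a \geq 1$ was arbitrary the proof is complete.

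There is no genuine obstacle here; the entire content is carried by Proposition 4.2, and the argument is a two-line squeeze between the upper bound it supplies and the matching trivial lower bound. The only points to watch are the orientation of the inequality in Proposition 4.2 (the larger parameter appears on the left, paired with the value of $RW$ at the smaller one) and the fact that the lower bound $RW(a, X) \geq a$ is only guaranteed once $a \geq 1$ — which is precisely why the statement is restricted to that range. Since the argument is essentially identical to that of Corollary 3.9, one could also simply remark that it is omitted for brevity.
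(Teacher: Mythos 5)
Your argument is correct and is exactly the one the paper intends: it mirrors the proof of Corollary 3.9, using Proposition 4.2 with the pair $1 \leq a$ to get $RW(a,X) \leq a\,RW(1,X) = a$ and squeezing against the trivial lower bound. (One tiny quibble: $1 \vee a \leq RW(a,X)$ gives $RW(a,X) \geq a$ for \emph{all} $a>0$, not just $a \geq 1$; the restriction to $a \geq 1$ in the statement is needed because for $a<1$ one has $RW(a,X) \geq 1 > a$, so the equality could never hold — but this does not affect your proof.)
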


\begin{corollary}

Let $X$ be a separable Banach space then the following are equivalent.

\begin{enumerate}

\item[(a)] $RW(a, X) = 1 + a$ for all $a > 0$;

\item[(b)] There exists $a > 0$ such that $RW(a, X) = 1 + a$;

\item[(c)] $RW(1, X) = 2.$

\end{enumerate}

\end{corollary}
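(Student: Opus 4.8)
The plan is to derive this corollary directly from proposition 4.1 by passing to negations. The key point is the two-sided estimate $1 \vee a \leq RW(a, X) \leq 1 + a$ recorded at the start of this section: it guarantees that for each fixed $a > 0$ exactly one of ``$RW(a, X) = 1 + a$'' and ``$RW(a, X) < 1 + a$'' holds, so the three conditions in the present corollary are precisely the logical negations of the three conditions in proposition 4.1.

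First I would line up the correspondence. Condition (b) here, that there exists $a > 0$ with $RW(a, X) = 1 + a$, is the negation of condition (a) of proposition 4.1, that $RW(a, X) < 1 + a$ for all $a > 0$. Condition (a) here, that $RW(a, X) = 1 + a$ for all $a > 0$, is the negation of condition (b) of proposition 4.1, that there exists $a > 0$ with $RW(a, X) < 1 + a$. And condition (c) here, that $RW(1, X) = 2$, is the negation of condition (c) of proposition 4.1, that $RW(1, X) < 2$. Since proposition 4.1 says its three conditions are mutually equivalent, their negations are mutually equivalent too, which is exactly the assertion of the corollary.

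There is no genuine obstacle in this argument; the only thing to watch is that the inequality $RW(a, X) \leq 1 + a$ is what turns ``$= 1 + a$'' and ``$< 1 + a$'' into true complements, so this bound must be cited explicitly. If instead a self-contained proof were wanted, one could mimic the proof of proposition 3.1 line for line, the single modification being that, because $RW$ involves a minimum, one chooses for each $n$ a norming functional $x_n^*$ for whichever of $x_n + x$ and $x_n - x$ attains $\liminf_{n \rightarrow \infty}\| x_n + x \| \wedge \liminf_{n \rightarrow \infty}\| x_n - x \|$, passes to a subsequence along which the same side is chosen and $x_n^*$ converges weak* to some $x^* \in B_{X^*}$, and then rescales $x$ by $\frac{b}{a}$; this, however, merely reproduces a portion of corollary 4.3 of [12], so the negation argument is the cleaner route.
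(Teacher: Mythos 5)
Your negation argument is correct and is essentially the route the paper intends: the paper states this as an immediate corollary (offering no written proof beyond a remark pointing to [12]), and deriving it from Proposition 4.1 by negating each condition, with the bound $RW(a,X) \leq 1+a$ supplying the complementarity of ``$= 1+a$'' and ``$< 1+a$'', is exactly the step being left to the reader. Your correspondence between the three conditions and the negations of those in Proposition 4.1 is accurate, so nothing further is needed.
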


{\bf Remark}   Some of the last corollary is mentioned in the proof of corollary 4.3 of [12].

\begin{corollary}

Let $X$ be a separable Banach space then either 
\[RW(a, X) = 1 + a \mbox{ for all }a > 0 \mbox{ or } RW(a, X) < 1 + a \mbox{ for all } a > 0.\]

\end{corollary}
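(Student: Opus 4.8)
The plan is to obtain this dichotomy as an immediate consequence of Proposition 4.1, in the same spirit that Corollary 3.5 follows from the analysis of $R(a,X)$. Recall from the opening of this section that $RW(a,X)\le 1+a$ for every $a>0$; hence, for each fixed $a$, exactly one of the two possibilities $RW(a,X)=1+a$ and $RW(a,X)<1+a$ occurs, and it only remains to rule out a ``mixed'' situation.

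First I would suppose that the first alternative fails, that is, that it is \emph{not} the case that $RW(a,X)=1+a$ for all $a>0$. Then there is some $a_0>0$ with $RW(a_0,X)\ne 1+a_0$, and since $RW(a_0,X)\le 1+a_0$ always holds, this forces $RW(a_0,X)<1+a_0$. Thus condition (b) of Proposition 4.1 is satisfied. Applying the implication (b) $\Rightarrow$ (a) of Proposition 4.1 then yields $RW(a,X)<1+a$ for all $a>0$, which is the second alternative. So at least one of the two stated alternatives holds, and they are plainly mutually exclusive (e.g.\ they disagree at $a=1$), which finishes the argument.

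I do not expect a real obstacle: the substance is already encapsulated in Proposition 4.1, whose proof in turn rests on the fact established in corollary 4.3 of [12] that $RW(b,X)=1+b$ for a single $b>0$ propagates to $RW(c,X)=1+c$ for all $c>0$. The only point that merits an explicit word is the elementary upgrade from $RW(a_0,X)\ne 1+a_0$ to the strict inequality $RW(a_0,X)<1+a_0$, justified by the universal bound $RW(a,X)\le 1+a$.
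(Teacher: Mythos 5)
Your argument is correct and is exactly the intended one: the paper states this corollary without proof, as an immediate consequence of Proposition 4.1 (via the implication (b) $\Rightarrow$ (a)) together with the universal bound $RW(a,X)\leq 1+a$. Nothing is missing.
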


\begin{corollary}

Let $X$ be a separable Banach space then either 
\[RW(a, X) = 1 + a \mbox{ for all }a > 0 \mbox{ or } RW(1, X) < 2.\]

\end{corollary}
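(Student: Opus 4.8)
The plan is to obtain this as an immediate consequence of the material already established for $RW(a, X)$, since the statement is essentially a repackaging of the dichotomy in Corollary 4.7. First I would record the elementary upper bound $RW(1, X) \le 2$, noted at the start of this section. Hence, for the particular value $a = 1$, exactly one of the two alternatives $RW(1, X) = 2$ or $RW(1, X) < 2$ holds.

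Next I would dispose of the first alternative. If $RW(1, X) = 2$, then Corollary 4.6, in the implication (c)$\Rightarrow$(a), immediately yields $RW(a, X) = 1 + a$ for all $a > 0$, which is the first option in the statement. In the remaining case we simply have $RW(1, X) < 2$, which is the second option. Since these two cases are mutually exclusive and exhaustive, the dichotomy follows.

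Alternatively, one can quote Corollary 4.7 directly: it already asserts that either $RW(a, X) = 1 + a$ for all $a > 0$ or $RW(a, X) < 1 + a$ for all $a > 0$, and specialising the second branch to $a = 1$ gives $RW(1, X) < 2$. Either route works, and there is no genuine obstacle here — the real content was spent earlier, in Proposition 4.1 and Corollary 4.6, which in turn rest on the fact (extracted from the proof of corollary 4.3 of [12]) that $RW(b, X) = 1 + b$ for a single $b > 0$ forces $RW(c, X) = 1 + c$ for every $c > 0$. I would therefore keep the write-up to a couple of lines, in the same spirit as the treatment of Corollary 3.6 for $R(a, X)$.
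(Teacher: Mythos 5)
Your argument is correct and matches the paper's intent exactly: the paper states this corollary without proof as an immediate consequence of the preceding results, and your derivation (either via Corollary 4.6, (c)$\Rightarrow$(a), applied to the case $RW(1,X)=2$, or by specialising the second branch of Corollary 4.7 to $a=1$) is precisely the routine deduction being left to the reader. No gaps.
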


In [10] proposition III-7 states that $R(X) = 1$ and $X$ having a Kadec-Klee norm is equivalent to $X$ having the Schur property.   This result can now be rewritten in terms of $RW(a, X).$

\begin{proposition}

Let $X$ be a separable Banach space then the following are equivalent.

\begin{enumerate}

\item[(a)] There exists $a > 0$ such that $RW(a, X) = a$ and $X$ has Kadec-Klee norm;
\item[(b)] $X$ has the Schur property.

\end{enumerate}

\end{proposition}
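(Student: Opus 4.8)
The plan is to establish the two implications separately. The implication (b) $\Rightarrow$ (a) is the easy one: if $X$ has the Schur property then every weak null sequence is norm null, so for any $x$ with $\|x\| \leq 1$ and any weak null $(x_n)$ with $\|x_n\| \leq 1$ one has $\|x_n\| \to 0$, hence $\liminf_n\|x_n + x\| \wedge \liminf_n\|x_n - x\| = \|x\| \leq 1$; combined with $RW(1,X) \geq 1$ this gives $RW(1,X) = 1$, and since the Schur property trivially implies the Kadec-Klee property, (a) holds with $a = 1$. (Alternatively one may quote proposition III-7 of [10] to obtain $R(X) = 1$ and then use $RW(1,X) \leq R(X)$.)

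For (a) $\Rightarrow$ (b) I would argue by contradiction. Assume $RW(a, X) = a$ for some $a > 0$, that $X$ has the Kadec-Klee norm, but that $X$ fails the Schur property. Failure of the Schur property gives, after a translation, a weak null sequence whose norms do not tend to $0$; passing to a subsequence and normalising, we obtain a weak null sequence $(u_n)$ with $\|u_n\| = 1$ for every $n$. The key idea is to test $RW(a, X)$ at ``its own scale'': set $v = a u_1$, so that $\|v\| = a$, and apply the definition of $RW(a, X)$ (in its $\limsup$ form) to the sequence $(u_n)_{n \geq 2}$ and the vector $v$, which satisfy $\|v\| \leq a$, $\|u_n\| \leq 1$ and $u_n \rightharpoonup 0$; this gives
\[ RW(a, X) \geq \limsup_n\|u_n + v\| \wedge \limsup_n\|u_n - v\|. \]
Since $u_n + v \rightharpoonup v$, weak lower semicontinuity of the norm gives $\liminf_n\|u_n + v\| \geq \|v\| = a$, so either $\limsup_n\|u_n + v\| > a$ or $\|u_n + v\| \to \|v\|$; in the latter case the Kadec-Klee property forces $u_n + v \to v$, i.e.\ $u_n \to 0$, contradicting $\|u_n\| = 1$. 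Hence $\limsup_n\|u_n + v\| > a$, and the same reasoning applied to $u_n - v \rightharpoonup -v$ gives $\limsup_n\|u_n - v\| > a$. Therefore $RW(a, X) > a$, contradicting $RW(a, X) = a$, so $X$ has the Schur property, which is (b).

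The step I expect to be the main obstacle is the choice made above, namely \emph{not} trying to reduce first to the case $a = 1$. Corollary 4.3 propagates the equality $RW(a, X) = a$ only upward, to all $b \geq a$, so it does not yield $RW(1, X) = 1$; and the obvious alternative --- enlarging the test vector $x$ in a configuration for $RW(1,X)$ to $ax$, using $RW(a,X) = a$, and bounding $\|x_n + x\|$ by a convex combination of $\|x_n + ax\|$ and $\|x_n\|$ --- only yields $RW(1,X) \leq (2a-1)/a$, which exceeds $1$ when $a > 1$. So a route through $R(X) = 1$ and proposition III-7 of [10] is not available directly. What makes the argument work uniformly in $a$ is instead using a test vector of norm exactly $a$, taken from a normalised weak null sequence, and invoking the Kadec-Klee norm to turn the bound $\liminf_n\|u_n \pm v\| \geq a$ into a strict inequality. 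A minor point: the Kadec-Klee property is sometimes formulated only for sequences on the unit sphere, but the general form used above follows from it (normalise $u_n + v$ before applying it), so this causes no trouble.
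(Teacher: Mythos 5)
Your proposal is correct and follows essentially the same route as the paper: both argue by contradiction from a weakly null, non-norm-null sequence, test $RW(a,X)$ against a vector of norm exactly $a$, and combine weak lower semicontinuity of the norm with the Kadec--Klee property to force the contradiction. The only differences are cosmetic (the paper splits into two cases according to which of $\liminf_n\|x_n+x\|$, $\liminf_n\|x_n-x\|$ is smaller, while you show directly that both $\limsup$s exceed $a$, and you take the test vector from the normalised sequence itself rather than an arbitrary $x$ with $\|x\|=a$).
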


\begin{proof}

 (b) $\Rightarrow$ (a) is straightforward.

$\bf{(a) \Rightarrow (b):}$  Suppose that $X$ does not have the Schur property then there exists $x_n \rightharpoonup 0, \| x_n \| \leq 1 \mbox{ for all } n \mbox{ with } \lim_{n \rightarrow \infty} \| x_n  \| = \alpha > 0.$

Consider $x \in X, \| x \| = a, RW(a, X) = a.$  Then
\[\liminf_{n \rightarrow \infty}\| x_n + x \| \wedge \liminf_{n \rightarrow \infty}\| x_n - x \| \leq \| x \|.\]
 
\begin{enumerate}

\item $\liminf_{n \rightarrow \infty}\| x_n + x \| \leq \liminf_{n \rightarrow \infty}\| x_n - x \|$

\bigskip

So $\liminf_{n \rightarrow \infty}\| x_n + x \| \leq \| x \|.$

\bigskip

Weak lower semicontinuity of the norm means 
\[\| x \| \leq \liminf_{n \rightarrow \infty}\| x_n + x \|.\]

Thus $\liminf_{n \rightarrow \infty}\| x_n + x \|  = \| x \|$ but $x_n + x \rightharpoonup x.$

\bigskip

The Kadec-Klee norm implies $x_n + x \rightarrow x$ which means $x_n \rightarrow 0.$  

\bigskip

This contradicts $\lim_{n \rightarrow \infty} \| x_n  \| = \alpha > 0.$

\bigskip

\item $\liminf_{n \rightarrow \infty}\| x_n - x \| \leq \liminf_{n \rightarrow \infty}\| x_n + x \|$

\bigskip

The proof follows the same form as in (1) and results in a contradiction.

\end{enumerate}

Therefore $X$ has the Schur property.

\end{proof}

{\bf Remark} At this stage $R(a, X) = a$ cannot replace $RW(a, X) = a.$

{\bf Other relationships}

$RW(1,X) \leq J(X)$ where $J(X)$ is the James constant.

\begin{proof}

\[J(X) := \sup \{ \{ \| x + y \| \wedge \| x - y \| \}: \| x \|, \| y \| \leq 1 \}.\]

Let $\eta  > 0$ then there exists $x \in X, \| x \| \leq 1, x_n \rightharpoonup 0, \| x_n \| \leq 1 \mbox{ for all } n$ such that 

\[\liminf_{n \rightarrow \infty}\| x_n + x \| \wedge \liminf_{n \rightarrow \infty}\| x_n - x \| > RW(1, X) - \eta.\]

By extracting a subsequence, still denoted by $(x_n)$, we may assume 

\[\lim_{n \rightarrow \infty}\| x_n + x \| \wedge \lim_{n \rightarrow \infty}\| x_n - x \| > RW(1, X) - \eta.\]

But $J(X) \geq \| x_n + x \| \wedge \| x_n - x \| \mbox{ for all } n.$

Therefore $J(X) \geq \lim_{n \rightarrow \infty}\| x_n + x \| \wedge \lim_{n \rightarrow \infty}\| x_n - x \|.$

$J(X) > RW(1, X) - \eta.$

Allowing $\eta \rightarrow 0, J(X) \geq RW(1, X).$

\end{proof}

$[RW(1,X)]^2 \leq 2C_{NJ}(X)$ where $C_{NJ}(X)$ is the von-Neumann-Jordan constant.

\begin{proof}

\begin{align*}
C_{NJ} & := \sup \left \{ \frac{ \| x + y \|^2 + \| x - y \|^2 }{ 2\left ( \| x \|^2 + \| y \|^2 \right ) } : \| x \|  + \| y \| \not = 0   \right \} \\
& =  \sup \left \{ \frac{ \| x + y \|^2 + \| x - y \|^2 }{ 2\left ( \| x \|^2 + \| y \|^2 \right ) } : \| x \|  = 1, \| y \| \leq 1  \right \}.
\end{align*}

Following the same path as in the previous proof let $\eta > 0$ then there exists 
\[ x \in X, \| x \| \leq 1, x_n \rightharpoonup 0, \| x_n \| \leq 1 \mbox{ for all } n \]
such that 
\[ \liminf_{n \rightarrow \infty}\| x_n + x \| \wedge \liminf_{n \rightarrow \infty}\| x_n - x \| > RW(1, X) - \eta. \]

By extracting a subsequence, still denoted by $(x_n)$, we may assume 
\[ \lim_{n \rightarrow \infty}\| x_n + x \| \wedge \lim_{n \rightarrow \infty}\| x_n - x \| > RW(1, X) - \eta. \]

Now
\[ C_{NJ} \geq \frac{ \| x_n + x \|^2 + \| x_ n - x \|^2 }{ 2\left ( \| x_n \|^2 + \| x \|^2 \right ) } \mbox{ for all } n. \]

\begin{align*}
\mbox{ Therefore } C_{NJ} & \geq \frac{ \| x_n + x \|^2 + \| x_ n - x \|^2 }{ 2\left ( 1+ 1 \right ) } \mbox{ for all } n. \\
4C_{NJ} & \geq  \| x_n + x \|^2 + \| x_ n - x \|^2 \mbox{ for all } n \\
& \geq \lim_{n \rightarrow \infty}\| x_n + x \|^2 + \lim_{n \rightarrow \infty}\| x_n - x \|^2 \\
& \geq  2(\lim_{n \rightarrow \infty}\| x_n + x \| \wedge \lim_{n \rightarrow \infty}\| x_n - x \|)^2.
\end{align*}

So $2C_{NJ} \geq (RW(1, X) - \eta)^2.$

Allowing $\eta \rightarrow 0, 2C_{NJ} \geq RW(1, X) ^2.$

\end{proof}

Equality in both these inequalities is achieved in $l_p, 1 < p \leq 2.$

Consider $l_p, 1 < p \leq 2.$  Because $l_p$ has WORTH, $R(l_p) = RW(1, l_p) = 2^{1/p}.$

 Note that Banach space, $X$, has WORTH if for every weak null sequence and every $x \in X$
 \[ \lim_{n \rightarrow \infty} | \| x_n - x \| - \| x_n + x \| |  = 0. \]

This property was introduced by Sims [28], Rosenthal [26] and Cowell and Kalton [1]

$J(l_p) = 2^{1/p} = RW(1, l_p).$

$C_{NJ}(l_p) = 2^{2/p - 1} \mbox{ so } 2C_{NJ}(l_p) = 2 \times 2^{2/p - 1} = 2^{2/p} = RW(1, l_p)^2$.

See Saejung [27].

Similar inequalities appear involving $R(a, X), R(X), C_{NJ}(X), WCS(X) \mbox{ and }J(X)$ in Saejung [27] and Mazcu\~{n}\'{a}n-Navarro [21].

\begin{proposition}

Let $X$ be a separable Banach space then if $X$ is uniformly nonsquare then $X$ has the FPP.

\end{proposition}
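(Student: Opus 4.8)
The plan is to push the geometry of $X$ through the inequality $RW(1,X)\le J(X)$ just established, then through Proposition 4.1, and finally through the fixed point theorem of [12]. First I would unwind the definition: $X$ is uniformly nonsquare exactly when the James constant satisfies $J(X)<2$. Feeding this into $RW(1,X)\le J(X)$ gives $RW(1,X)<2$, which is condition (c) of Proposition 4.1. That proposition then yields $RW(a,X)<1+a$ for every $a>0$, and in particular there is some $a>0$ with $RW(a,X)<1+a$.

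Second, I would recall the classical theorem of James that every uniformly nonsquare Banach space is super-reflexive, hence reflexive. In a reflexive space every closed bounded convex set is weakly compact, so the FPP and the w-FPP coincide, and it is enough to establish the w-FPP. This also places us in the reflexive setting in which the equivalence, recalled in Section 4, between ``$RW(a,X)<1+a$ for some $a>0$'', ``$\Gamma_X'(0)<1$'', and ``$\Gamma_X(t)<t$ for some $t>0$'' is available.

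Third, I would invoke the result of Garc\'{i}a-Falset, Llorens-Fuster and Mazcu\~{n}an-Navarro [12]: a reflexive Banach space for which $RW(a,X)<1+a$ for some $a>0$ (equivalently $\Gamma_X'(0)<1$) has the w-FPP. Applied with the $a$ produced in the first step, this gives the w-FPP, and reflexivity then upgrades it to the FPP, completing the argument.

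The step I expect to be the real obstacle is the last one: passing from the purely isometric-geometric inequality $RW(a,X)<1+a$ to an honest statement about fixed points of nonexpansive maps is the substantive part, and it is exactly the content of [12] (itself built on Dom\'{i}nguez-Benavides's fixed point theorem phrased in terms of $R(a,X)$). Everything preceding it is bookkeeping with the two inequalities proved above and with Proposition 4.1; the only points needing a little care are that Proposition 4.1, like the rest of Section 4, is stated for separable $X$ — which matches the hypothesis here — and that it is reflexivity, via James's theorem, that allows w-FPP to be replaced by FPP.
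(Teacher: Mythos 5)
Your proposal is correct and follows essentially the same route as the paper: the paper's own proof is just the one-line observation that uniform nonsquareness is equivalent to $J(X)<2$ (or to $C_{NJ}(X)<2$), leaving implicit exactly the chain you spell out — the inequality $RW(1,X)\leq J(X)$, Proposition 4.1, reflexivity via James's theorem, and the fixed point theorem of [12]. You have simply made explicit the steps the paper takes for granted.
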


\begin{proof}

Either use uniformly nonsquare is equivalent to $J(X) < 2$ or is equivalent to $C_{NJ}(X) < 2.$

\end{proof}
 
\section{$R(X) = 1, R(1, X) = 1, RW(1, X) = 1$}
 
 An important Banach space property is Property($M$) where  whenever $x_n \rightharpoonup 0 $
 then $\limsup_{\rightarrow \infty}\| x_n - x \|$ is a function of $\| x \|$ only.  An equivalent definition is

\begin{definition}

A Banach space $X$ has Propery($M$) if whenever $x_n \rightharpoonup 0 \mbox{ and } \newline \| u \| \leq \| v \| \mbox{ then } \limsup_{n \rightarrow \infty} \| x_n + u \| \leq \limsup_{n \rightarrow \infty} \| x_n + v \|.$ 

\end{definition}

It was shown in [13] that Property($M$) implies the w-FPP.  

Property($M^*$) in $X^*$ is the same except that it involves weak* null sequences.

In [2] it was shown that $R(X) = 1$ implies $X$ has property($M$).  A crucial step in the proof was to show that $R(X) = 1$ implies $X$ has the nonstrict Opial condition.   That is
\[ \mbox{ if } x_n \rightharpoonup 0, \mbox{ then } \limsup_{n \rightarrow \infty}\| x_n \| \leq\limsup_{n \rightarrow \infty}\| x_n + x \| \mbox{ for all } x. \]

If the sequence, $(x_n)$, is in the dual, $X^*$, and the convergence is weak* then the condition is called the nonstrict *Opial condition.

Dhompongsa and Kaewkhao in [6] improved the result from [2] by showing that $R(X) = 1$ if and only if $X$ has property($m_\infty$).  

Here property($m_\infty$) is if $x_n \rightharpoonup 0$ and $x \in X$ then 
\[ \limsup_{n \rightarrow \infty}\| x_n + x \| =  \limsup_{n \rightarrow \infty}\| x_n \| \vee \| x \|. \]

This property was introduced in [16] and formally named as such in [17].  In $X^*$, property($m_\infty^*$) is defined in the same way but with weak* null sequences.

\begin{proposition} 

Let $X$ be a separable Banach space with $R(X) = 1$ then $c_0 \hookrightarrow X.$

\end{proposition}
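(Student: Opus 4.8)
The plan is to use the Dhompongsa--Kaewkhao characterisation recalled above, namely that $R(X)=1$ is equivalent to property($m_\infty$),
\[
\limsup_{n\to\infty}\|x_n+x\|=\|x\|\vee\limsup_{n\to\infty}\|x_n\|
\qquad\text{for every weakly null }(x_n)\text{ and every }x\in X,
\]
and then to extract from a weakly null sequence a subsequence equivalent to the unit vector basis of $c_0$. We may assume that $X$ admits a weakly null sequence $(x_n)$ that does not converge in norm (equivalently, that $X$ fails the Schur property). Passing to a subsequence and rescaling, I would first arrange that $\limsup_n\|x_n\|=1$ and $\tfrac12<\|x_n\|<\tfrac32$ for every $n$; then, by the Bessaga--Pe\l{}czy\'{n}ski selection principle, pass once more to a \emph{basic} subsequence, with basis constant $K$ say (every further subsequence remains basic with constant at most $K$).

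Next I would choose a subsequence $(y_k)$ of $(x_n)$ recursively so that
\[
\Big\|\sum_{k=1}^{N}a_k y_k\Big\|\le C_N:=1+\sum_{k=1}^{N}2^{-k}\qquad\text{whenever }\max_{k\le N}|a_k|\le1 .
\]
Given $y_1,\dots,y_N$, property($m_\infty$) applied to the weakly null sequence $(a_{N+1}x_m)_{m}$ and the fixed vector $\sum_{k\le N}a_ky_k$ gives, for each fixed choice of scalars with $\max_{k\le N+1}|a_k|\le1$,
\[
\limsup_{m\to\infty}\Big\|\sum_{k=1}^{N}a_k y_k+a_{N+1}x_m\Big\|=\Big\|\sum_{k=1}^{N}a_k y_k\Big\|\vee\big(|a_{N+1}|\limsup_{m}\|x_m\|\big)\le C_N .
\]
The task is then to pick a \emph{single} index $m=n_{N+1}$, larger than the ones already used, for which $\big\|\sum_{k\le N}a_ky_k+a_{N+1}x_m\big\|\le C_{N+1}$ holds simultaneously for all coefficient vectors $(a_1,\dots,a_{N+1})$ in the cube $\{\,|a_k|\le1\,\}$. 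This is possible because that norm is Lipschitz in the coefficients with a constant bounded independently of $m$ (by $\sum_{k\le N}\|y_k\|+\tfrac32$), so it suffices to enforce the estimate at the points of a finite net of the cube, where the displayed $\limsup$ bound applies. Setting $y_{N+1}=x_{n_{N+1}}$ completes the step, and letting $N\to\infty$ yields $\big\|\sum_k a_ky_k\big\|\le2\max_k|a_k|$ for every finitely supported family of scalars.

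For the opposite inequality I would use that $(y_k)$ is basic: since $\|a_jy_j\|\le2K\big\|\sum_k a_ky_k\big\|$ for every $j$ and $\|y_j\|>\tfrac12$, we get $\big\|\sum_k a_ky_k\big\|\ge\frac{1}{4K}\max_j|a_j|$. Combining the two estimates, $(y_k)$ is equivalent to the unit vector basis of $c_0$, its closed linear span is isomorphic to $c_0$, and therefore $c_0\hookrightarrow X$. The only genuinely delicate point is the uniformity in the recursive step: property($m_\infty$) delivers the limiting identity only for each \emph{fixed} coefficient vector, whereas the induction needs one index controlling all of them at once. Compactness of the coefficient cube together with the uniform Lipschitz bound takes care of this, but it is where the argument has to be written carefully; the remaining ingredients --- the rescaling, the Bessaga--Pe\l{}czy\'{n}ski selection, and the basic-sequence lower estimate --- are standard.
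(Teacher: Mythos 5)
Your route is genuinely different from the one in the paper. The paper's proof is essentially a citation: it invokes the characterisation from [2] that, under Property($M$), $c_0\hookrightarrow X$ if and only if there is a weakly null sequence with $\lim_n\|x_n\|=1$ satisfying $\lim_n\|x+tx_n\|=1\vee t$ for all $t\ge 0$ and all norm-one $x$, and then observes that property($m_\infty$) (equivalent to $R(X)=1$ by [6]) supplies such a sequence. You instead build a copy of the $c_0$-basis from scratch: Bessaga--Pe\l{}czy\'{n}ski selection, then a recursive choice of indices in which property($m_\infty$) gives the upper $c_0$-estimate and the basis constant gives the lower one. The part you flag as delicate --- passing from the pointwise $\limsup$ identity to a single index working for the whole coefficient cube via a finite net and a uniform Lipschitz bound --- is handled with the right device, and that portion of the argument is sound. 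What your approach buys is a self-contained, quantitative proof that does not lean on the machinery of [2]; what it costs is length and the bookkeeping of the recursion.

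The genuine gap is the opening reduction ``we may assume that $X$ fails the Schur property.'' This is not a harmless normalisation, and it is not justified in your write-up. If $X$ has the Schur property then every weakly null sequence is norm null, so $\liminf_n\|x_n+x\|=\|x\|\le 1$ for every admissible pair and $R(X)=1$ holds automatically; yet $c_0$ need not embed --- $X=\ell_1$ is separable, has the Schur property, satisfies $R(\ell_1)=1$ under Definition 2.1, and contains no copy of $c_0$. So the case you discard is precisely the case in which your construction has no weakly null sequence to work with, and it cannot be recovered by any further argument. (The proof in the paper has the same unspoken assumption, since the criterion it quotes from [2] also requires a weakly null sequence with $\lim_n\|x_n\|=1$.) As written, your proof establishes the implication only under the additional hypothesis that some weakly null sequence in $X$ is not norm null, i.e.\ that $X$ fails the Schur property; either that hypothesis must be added to the statement or the reduction must be justified, and the example of $\ell_1$ shows it cannot be.
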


\begin{proof}

In [2] it was shown that if $X$ is a separable Banach space with Property($M$) then $c_0  \hookrightarrow X$ if and only if there exists $x_n \rightharpoonup 0, \lim_{n \rightarrow \infty}\| x_n \| = 1$ such that 
\[ \lim_{n \rightarrow \infty}\| x + t x_n \| = 1 \vee t  \]

for all $t \geq 0, \mbox{ and all } x, \mbox{ with } \| x \| = 1.$ 

Property($m_\infty$) ensures that this condition is satisfied for all weak null sequences.

\end{proof}

{\bf Remarks} 

\begin{enumerate}

\item[1.]  Properties that $X$ does NOT posses when $R(X) = 1$ are: weak normal structure, Kadec-Klee norm, the Schur property, property($K$), reflexivity and Opial's condition.  See [2] for the reasoning.

\item[2.] If $\ell_1 \hookrightarrow X$ nothing can be said.  Most of the results from Kalton [16] and Kalton and Werner [17] require $\ell_1 \not \hookrightarrow X.$  So if in addition to $R(X) = 1$ we have $\ell_1 \not \hookrightarrow X$ then $X^*$ has Property($M^*$), has weak * Kadec-Klee norm, is separable so $c_0 \not \hookrightarrow X^*$.  $X^*$ also has property($m_1^*$) which means for every weak* null sequence, $(x_n^*),$ and every $x^* \in X$ we have

\[ \limsup_{n \rightarrow \infty}\| x_n ^* +  x^* \| = \limsup_{n \rightarrow \infty}\| x_n^* \|  + \| x^* \|. \]

This property implies that $X^*$ has the uniform Opial condition and also $R(X^*) = 2.$

Note, for future reference, that propery($m_1$) in $X$ means that for every weak null sequence, $(x_n) \mbox{ and } x \in X$

\[ \limsup_{n \rightarrow \infty}\| x_n + x \| = \limsup_{n \rightarrow \infty}\| x_n \|  + \| x \|. \]

If $X^*$ has Property($M^*$) then $X$ is $M$-embedded and $X^*$ is an $L$-summand.  An $M$-embedded nonreflexive Banach space, $X,$ is not weak sequentially complete and fails the Radon-Nikodym property.  Plus every copy of $c_0$ in $X$ is complemented.  $X$ being $M$-embedded also means that $X^*$ is weak sequentially complete and contains a complemented copy of $\ell_1.$
 
\item[3.] Kalton and Werner [17] have shown that if $X$ is a separable Banach space not containing a copy of $\ell_1$ then $X$ has property($m_\infty$) if and only if for every $\epsilon > 0,$ there is a subspace $X_0$ of $c_0$ with $d(X, X_0) < 1 + \epsilon.$  Thus when $R(X) = 1$ and $\ell_1 \not \hookrightarrow X$ then $X$ contains a complemented copy of $c_0$ and is almost isometric to a subspace of $c_0.$

\end{enumerate}

 At this stage, assuming either $R(1, X) = 1$ or $RW(1, X) = 1$ instead of $R(X) = 1$, that crucial step of $X$ possessing the nonstrict Opial condition cannot be proven.  So it is an open problem: If either $R(1, X) = 1$ or $RW(1, X) = 1$ does $X$ have Property($M$)?
 
But in the mean time we can state the following propositions.

\begin{proposition} 

If $X$ be a separable Banach space with $RW(1, X) = 1$ and satisfies the nonstrict Opial condition then $X$ has Property($M$).  In particular, it has property($m_\infty$).

\end{proposition}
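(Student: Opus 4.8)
The plan is to prove that $X$ has property($m_\infty$); Property($M$) is then immediate, because if $\|u\|\le\|v\|$ then property($m_\infty$) gives
$\limsup_n\|x_n+u\|=\limsup_n\|x_n\|\vee\|u\|\le\limsup_n\|x_n\|\vee\|v\|=\limsup_n\|x_n+v\|$ for every weak null $(x_n)$. So fix $x_n\rightharpoonup 0$ and $x\in X$ and put $M=\limsup_n\|x_n\|\vee\|x\|$ (the case $M=0$ being trivial). One inequality costs nothing: since $x_n+x\rightharpoonup x$, weak lower semicontinuity of the norm yields $\|x\|\le\limsup_n\|x_n+x\|$, while the nonstrict Opial condition yields $\limsup_n\|x_n\|\le\limsup_n\|x_n+x\|$; hence $M\le\limsup_n\|x_n+x\|$.

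For the reverse inequality the obstacle is that $RW(1,X)$, unlike $R(X)$, controls only the \emph{minimum} of $\limsup_n\|x_n+x\|$ and $\limsup_n\|x_n-x\|$, so the straight rescaling argument available when $R(X)=1$ breaks down. I would repair this by symmetrising the sequence. Fix $\varepsilon>0$, let $\alpha$ be any cluster value of $(\|x_n+x\|)_n$, and pass to a subsequence $(x_{n_k})_k$ with $\|x_{n_k}+x\|\to\alpha$; refining further (all the sequences involved are bounded) we may also assume $\|x_{n_k}-x\|\to\beta$ and $\|x_{n_k}\|\to\sigma$, so that $\sigma\le\limsup_n\|x_n\|$, and, after discarding an initial block, $\|x_{n_k}\|\le\sigma+\varepsilon$ for all $k$. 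Now form the interleaved sequence $w_{2k-1}=x_{n_k}$, $w_{2k}=-x_{n_k}$. Then $w_j\rightharpoonup 0$, $\|w_j\|\le\sigma+\varepsilon$ for every $j$, and — this is the point — $\limsup_j\|w_j+x\|=\limsup_j\|w_j-x\|=\alpha\vee\beta$, since along the odd indices these two sequences converge to $\alpha$ and $\beta$ and along the even indices to $\beta$ and $\alpha$.

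Set $r=(\sigma\vee\|x\|)+\varepsilon$ and $u_j=w_j/r$, $v=x/r$; then $\|u_j\|\le 1$ for all $j$ and $\|v\|\le 1$. Because $RW(1,X)=1$ and $\liminf$ may be replaced by $\limsup$ in its definition, $\limsup_j\|u_j+v\|\wedge\limsup_j\|u_j-v\|\le 1$, which after clearing $r$ reads $\alpha\vee\beta\le r\le\limsup_n\|x_n\|\vee\|x\|+\varepsilon=M+\varepsilon$. In particular $\alpha\le M+\varepsilon$; as $\alpha$ was an arbitrary cluster value of $(\|x_n+x\|)_n$ and $\varepsilon>0$ was arbitrary, $\limsup_n\|x_n+x\|\le M$. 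Together with the previous paragraph this gives $\limsup_n\|x_n+x\|=M=\limsup_n\|x_n\|\vee\|x\|$, i.e.\ property($m_\infty$), and hence Property($M$).

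The only genuinely delicate step is noticing that $RW$ yields merely a minimum; once the symmetrisation $(x_n)\mapsto(x_{n_1},-x_{n_1},x_{n_2},-x_{n_2},\dots)$ is in place it forces the two one-sided $\limsup$'s to coincide, the minimum becomes inert, and the remainder is the same harmless rescaling that handles the case $R(X)=1$. The subsequence extraction, the weak nullity and norm bound for the interleaved sequence, and the bookkeeping in $\varepsilon$ are all routine.
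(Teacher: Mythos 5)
Your proof is correct and, at the decisive step, genuinely different from the paper's. Both arguments share the same skeleton: the lower bound $\|x\|\vee\limsup_n\|x_n\|\le\limsup_n\|x_n\pm x\|$ obtained from weak lower semicontinuity together with the nonstrict Opial condition, followed by a rescaling that brings $RW(1,X)=1$ to bear. The paper rescales the original sequence by $a=\|x\|\vee\limsup_n\|x_n\|$, concludes that $\limsup_n\|x_n+x\|\wedge\limsup_n\|x_n-x\|=a$, and then passes from ``one of the two limsups equals $a$'' to ``both do'' by ``changing $x$ into $-x$'' --- a step that deserves scrutiny, since that disjunction is invariant under $x\mapsto -x$. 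You avoid this issue entirely: interleaving $x_{n_k}$ with $-x_{n_k}$ forces the two one-sided limsups of the test sequence to coincide at $\alpha\vee\beta$, so the minimum in the definition of $RW$ becomes inert and every cluster value of $\|x_n+x\|$ is bounded by $M+\varepsilon$ directly. Your $\varepsilon$-padding $r=(\sigma\vee\|x\|)+\varepsilon$ also handles the requirement $\|u_j\|\le 1$ more carefully than the paper's ``by taking subsequences we may assume $\|x_n/a\|\le 1$'', which can fail when $\|x_n\|>a$ for every $n$ even though $\limsup_n\|x_n\|\le a$. One point to flag: your argument leans harder than the paper's on the stated remark that $\liminf$ may be replaced by $\limsup$ in the definition of $RW(a,X)$, because for your interleaved sequence the two $\liminf$'s are only $\alpha\wedge\beta$, so the $\liminf$ form of $RW(1,X)=1$ would yield nothing new. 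The paper asserts that replacement without proof (and its own proof of this proposition also invokes it), so you are entitled to it here, but it is the load-bearing external fact in your version and worth verifying independently.
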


\begin{proof}

Assume $X$ has the nonstrict Opial condition and $RW(1, X) = 1.$  Consider $x_n \rightharpoonup 0 \mbox{ and } x \in X.$  Using the weak lower semi-continuity of the norm we have
\[  \| x \| \leq  \limsup_{n \rightarrow \infty}\| x_n  \pm  x \|.  \]

The nonstrict Opial condition leads to
\[  \limsup_{n \rightarrow \infty}\| x_n \| \leq  \limsup_{n \rightarrow \infty}\| x_n  \pm  x \|. \]

Combining these two inequalities gives
\[ \| x \| \vee \limsup_{n \rightarrow \infty}\| x_n \| \leq \limsup_{n \rightarrow \infty}\| x_n  \pm  x \|. \]

Let $a = \| x \| \vee  \limsup_{n \rightarrow \infty}\| x_n \|.$   If $a = 0,$ then $m_\infty$ equation is trivially satisfied.  So assume $a \not = 0$ then
\[ \left \| \frac{ x }{ a} \right \| \leq 1, \limsup_{n \rightarrow \infty} \left \| \frac{ x_n }{ a } \right \| \leq 1 \mbox{ and } \frac{ x_n }{ a } \rightharpoonup 0. \]

By taking subsequences if necessary, we may assume that $\left \| \frac{ x_n }{ a } \right \| \leq 1 \mbox{ for all } n.$

Using $RW(1, X) = 1$,
\[ \limsup_{n \rightarrow \infty} \left \| \frac{ x_n }{ a} + \frac{ x }{ a } \right \| \wedge \limsup_{n \rightarrow \infty} \left \| \frac{ x_n }{ a} - \frac{ x }{ a } \right \| \leq 1. \]

Then 
\[ \limsup_{n \rightarrow \infty} \| x_n + x \| \wedge \limsup_{n \rightarrow \infty} \| x_n - x \|  \leq \| x \| \vee  \limsup_{n \rightarrow \infty}\| x_n \|. \]

But  $\| x \| \vee  \limsup_{n \rightarrow \infty}\| x_n \| \leq \limsup_{n \rightarrow \infty}\| x_n \pm  x \|$ leading to

\[ \limsup_{n \rightarrow \infty} \| x_n + x \| \wedge \limsup_{n \rightarrow \infty} \| x_n - x \| = \| x \| \vee  \limsup_{n \rightarrow \infty}\| x_n \|. \]

Then either 
\[\limsup_{n \rightarrow \infty} \| x_n + x \| = \| x \| \vee \limsup_{n \rightarrow \infty}\| x_n \| \mbox{ or } \limsup_{n \rightarrow \infty} \| x_n - x \| = \| x \| \vee \limsup_{n \rightarrow \infty}\| x_n \|. \]

Since this is true for all $x \in X$ changing $x$ into $-x$ means 
\[ \limsup_{n \rightarrow \infty}\| x_n  \pm  x \| = \| x \| \vee \limsup_{n \rightarrow \infty}\| x_n \|. \]

Thus $X$ has property($m_\infty$).

\end{proof}

For $R(1, X) = 1$ the situation is not as clear.  For a given weak null sequence $(x_n)$, if the norm of any element, $x,$ is big enough then $X$ behaves as if it has property($m_\infty$)  or even property($m_1$) or has the Schur property.  Below is an indication of what happens.  If the norm of $x$ is too small then no conclusion can be reached.  It is very curious.

Let $X$ have the nonstrict Opial condition and $R(1, X) = 1.$  Also consider \newline $x_n \rightharpoonup 0 \mbox{ and } x \in X.$  The first part follows the first section of the previous proof:
\[ \| x \| \vee \limsup_{n \rightarrow \infty}\| x_n \| \leq \limsup_{n \rightarrow \infty}\| x_n  \pm  x \|. \]

Let $a = \| x \| \vee  \limsup_{n \rightarrow \infty}\| x_n \|.$  If $a = 0,$ the $m_\infty$ equation is trivially satisfied.  So assume $a \not = 0$ then 
\[ \left \| \frac{ x }{ a} \right \| \leq 1, \limsup_{n \rightarrow \infty} \left \| \frac{ x_n }{ a } \right \| \leq 1 \mbox{ and } \frac{ x_n }{ a } \rightharpoonup 0. \]

By taking subsequences if necessary, we may assume that $\left \| \frac{ x_n }{ a } \right \| \leq 1 \mbox{ for all } n.$

Again using the weak lower semi-continuity of the norm, 
\[ \limsup_{n \rightarrow \infty}\| x_n - x_m \| \geq \| x_m \| \mbox{ for all } m. \]

So
\[ \limsup_{n \rightarrow \infty} \limsup_{m \rightarrow \infty}\| x_n - x_m \| \geq \limsup_{m \rightarrow \infty} \| x_m \|.   \quad \dag \]

If $\| x \| \geq \limsup_{n \rightarrow \infty}\limsup_{m \rightarrow \infty}\| x_n - x_m \|$ then using \dag

\[ \limsup_{n \rightarrow \infty} \limsup_{m \rightarrow \infty}\| x_n - x_m \| \leq \| x \| \vee \limsup_{n \rightarrow \infty} \| x_n \| = a. \]

Thus
\[ \limsup_{n \rightarrow \infty} \limsup_{m \rightarrow \infty} \left \| \frac { x_n }{ a} - \frac { x_m }{ a} \right \| \leq 1. \]

Using $R(1, X) = 1,$ 
\[ \limsup_{n \rightarrow \infty} \left \| \frac { x_n }{ a} + \frac { x }{ a} \right \| \leq 1. \]

Leading to $a \leq \limsup_{n \rightarrow \infty}\| x_n + x \| \leq a$ so
\[  \limsup_{n \rightarrow \infty}\| x_n + x \| = \| x \| \vee  \limsup_{n \rightarrow \infty}\| x_n \|. \]

This means that for a given weak null sequence, $(x_n),$ if any element, $x,$ of $X$ is such that $\| x \| \geq \limsup_{n \rightarrow \infty}\limsup_{m \rightarrow \infty}\| x_n - x_m \|$ then the property($m_\infty$) equation is satisfied.  So an interesting situation.  Note that if $\lim_{n \rightarrow \infty} x_n = 0$ then the property($m_1$) equation is also satisfied.  This would be true if $X$ has the Schur property.  Interestingly, in [2] the author showed that the Schur property is equivalent to $X$ satisfying both property($m_\infty$) and property($m_1$).

Close to the origin there is no structure associated with weak null sequences.  Away from the origin a very different scenario appears; $X$ has at least property($m_\infty$), maybe the Schur property.  Are there any other Banach spaces where this sort of thing occurs - irregularity near the origin, strong structure away from the origin?

Another important Banach space property, in the context of fixed point theory, is Property($K$) which was introduced in [29].  

\begin{definition}

A Banach space $X$ has property($K$) if there exists $K \in [0, 1)$ such that whenever $x_n \rightharpoonup 0, \lim_{ n \rightarrow \infty} \| x_n \| = 1 \mbox{ and }  \liminf_{n \rightarrow \infty}\| x_n - x \|  \leq 1 \mbox{ then } \| x \| \leq K.$

\end{definition}

Note that implicit in this definition is the fact that $X$ cannot have the Schur propery.

Sims in [29] showed that Property($K$) implied weak normal structure and hence the w-FPP.

If $R(X) = 1$ then $X$ cannot have Property($K$) has shown below.

\begin{proposition}

Let $X$ be a separable Banach space then if $X$ has Property($K$) then $R(X) > 1$.

\end{proposition}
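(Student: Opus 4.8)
The plan is to argue by contradiction. Since $1 \le R(X) \le 2$ always, the negation of $R(X) > 1$ is precisely $R(X) = 1$, so I would assume that $X$ has Property($K$) while $R(X) = 1$ and derive a contradiction. The first step is to bring in the characterisation of Dhompongsa and Kaewkhao recalled above: $R(X) = 1$ is equivalent to $X$ having property($m_\infty$), that is,
\[ \limsup_{n \rightarrow \infty}\| x_n + x \| = \limsup_{n \rightarrow \infty}\| x_n \| \vee \| x \| \]
for every weakly null sequence $(x_n)$ and every $x \in X$. In particular $X$ has Property($M$), so Proposition 5.2 applies: $c_0 \hookrightarrow X$, and the construction inside that proof (via the criterion of [2]) yields a weakly null sequence $(x_n)$ with $\lim_{n \rightarrow \infty}\| x_n \| = 1$.

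Next I would fix any $x \in X$ with $\| x \| = 1$ and apply property($m_\infty$) to the sequence $(x_n)$ and the vector $-x$ (legitimate since property($m_\infty$) is asserted for all vectors and $\| -x \| = \| x \|$), obtaining
\[ \limsup_{n \rightarrow \infty}\| x_n - x \| = \limsup_{n \rightarrow \infty}\| x_n \| \vee \| x \| = 1, \]
and hence $\liminf_{n \rightarrow \infty}\| x_n - x \| \leq 1$. Thus $(x_n)$ and $x$ satisfy every hypothesis in the definition of Property($K$): $x_n \rightharpoonup 0$, $\lim_{n \rightarrow \infty}\| x_n \| = 1$, and $\liminf_{n \rightarrow \infty}\| x_n - x \| \leq 1$. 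Therefore $\| x \| \leq K$ for the constant $K \in [0, 1)$ witnessing Property($K$), which contradicts $\| x \| = 1$. This forces $R(X) > 1$.

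The only delicate point is the production of a normalised weakly null sequence; once it is in hand, the rest is a one-line consequence of property($m_\infty$) and the definition of Property($K$). This step can be handled either by citing the construction inside Proposition 5.2 (equivalently, by combining $c_0 \hookrightarrow X$ with property($m_\infty$)), or, alternatively, by observing that Property($K$) precludes the Schur property, so some weakly null sequence fails to be norm-null, and then rescaling it along a subsequence on which its norm converges to a positive limit. Either route gives a weakly null $(x_n)$ with $\lim_{n \rightarrow \infty}\| x_n \| = 1$, which is all that is needed.
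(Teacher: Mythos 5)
Your argument is correct, but it takes a genuinely different route from the paper's. The paper's proof of this proposition is a two-step citation: $R(X)=1$ implies $c_0 \hookrightarrow X$ (Proposition 5.2), and a result of Dalby and Sims [5] says a space containing $c_0$ cannot have Property($K$); the contradiction is immediate. You instead work directly with the modulus: from $R(X)=1$ you get property($m_\infty$), you produce a weakly null sequence with $\lim_{n}\|x_n\|=1$ (your second route, via the paper's own remark that Property($K$) precludes the Schur property, is the cleaner one, since it keeps the proof self-contained and avoids leaning on Proposition 5.2 and the external reference [5]), and then $\liminf_{n}\|x_n-x\|\leq 1$ with $\|x\|=1$ forces $1\leq K<1$. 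This is essentially the argument the paper reserves for the companion results, Propositions 5.6 and 5.7 (the $RW(1,X)$ and $R(1,X)$ versions), transplanted to $R(X)$; the remark after Proposition 5.7 even notes that 5.7 implies 5.6 implies 5.5, so your direct proof is the one the paper implicitly endorses as subsuming its own. One simplification: you do not need the full strength of property($m_\infty$). Once you have a weakly null $(x_n)$ with $\|x_n\|\leq 1$ and $\lim_{n}\|x_n\|=1$ (normalising each term if necessary), the definition of $R(X)$ applied to $-x$ already gives $\liminf_{n}\|x_n-x\|\leq R(X)=1$ for any $\|x\|\leq 1$, which is all that Property($K$) requires.
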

 
\begin{proof}

Let $X$ have Property($K$) and assume that $R(X) = 1.$  Then by proposition 5.2, $c_0  \hookrightarrow X.$  In [5] Dalby and Sims showed that if $c_0 \hookrightarrow X$ then $X$ does not have Property($K$).  We have the required contradiction.

\end{proof}

The same applies to $RW(1, X).$

\begin{proposition}

If a separable Banach space, $X$, has Property($K$) then \newline $RW(1, X) > 1$.

\end{proposition}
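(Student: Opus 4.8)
The plan is to argue by contradiction directly from the definitions, rather than imitating the proof of the preceding proposition, which for $R(X)$ went through the embedding $c_0 \hookrightarrow X$ supplied by Proposition 5.2; here no analogue of Proposition 5.2 is available, since Proposition 5.3 only yields property($m_\infty$) under the additional nonstrict Opial hypothesis. So suppose $X$ has Property($K$) and, for contradiction, that $RW(1, X) = 1$.

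First I would manufacture a convenient weak null sequence. By the remark following the definition of Property($K$), a space with Property($K$) cannot have the Schur property, so there is a weakly null sequence in $X$ whose norms do not converge to $0$; passing to a subsequence along which the norms converge to a positive limit and then normalising each term, I may assume I have $x_n \rightharpoonup 0$ with $\| x_n \| = 1$ for every $n$. Now fix $x \in X$ with $\| x \| = 1$ (for instance $x = x_1$). The pair $((x_n), x)$ is admissible in the definition of $RW(1, X)$, so
\[ \liminf_{n \rightarrow \infty} \| x_n + x \| \wedge \liminf_{n \rightarrow \infty} \| x_n - x \| \leq RW(1, X) = 1, \]
whence at least one of the two $\liminf$'s is $\leq 1$. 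Since replacing $x$ by $-x$ leaves $\| x \| = 1$ unchanged and simply interchanges these two quantities, I may assume $\liminf_{n \rightarrow \infty} \| x_n - x \| \leq 1$.

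Finally I would invoke Property($K$): the sequence $(x_n)$ is weakly null, $\lim_{n \rightarrow \infty} \| x_n \| = 1$, and $\liminf_{n \rightarrow \infty} \| x_n - x \| \leq 1$, so Property($K$) forces $\| x \| \leq K < 1$, contradicting $\| x \| = 1$. Hence $RW(1, X) \neq 1$, and since $RW(1, X) \geq 1$ always, $RW(1, X) > 1$.

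The argument is short, so the only real points of care are: that the weak null sequence can be taken with $\| x_n \| = 1$ \emph{exactly}, so that it is simultaneously admissible in the definition of $RW(1, X)$ and meets the hypothesis $\lim_n \| x_n \| = 1$ of Property($K$); and the use of the $x \leftrightarrow -x$ symmetry built into $RW$ to handle the case $\liminf_n \| x_n + x \| \leq 1$. I would also remark that this reasoning does \emph{not} settle the corresponding question for $R(1, X)$, because a generic normalised weak null sequence need not satisfy $D[(x_n)] \leq 1$ and so is not admissible in the definition of $R(1, X)$ — which is exactly why that case is more delicate.
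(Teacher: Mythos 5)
Your proof is correct and is essentially the paper's own argument: test $RW(1,X)=1$ against a normalised weakly null sequence and a unit vector $x$, conclude that one of $\liminf_n\|x_n\pm x\|$ is at most $1$, and let Property($K$) (applied to $x$ or $-x$) force the contradiction $1=\|x\|\leq K<1$. The only difference is that you spell out the existence of the sequence with $\|x_n\|=1$ via the failure of the Schur property, a detail the paper leaves implicit.
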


\begin{proof}

Let $X$ have Property($K$) and assume that $RW(X) = 1.$

Consider $x_n \rightharpoonup 0, \| x_n \| =1 \mbox{ for all } n$ and $x \in X$ where $\| x \| = 1$ then $RW(1, X) = 1$ implies
\[ \liminf_{n \rightarrow \infty}\| x_n - x \| \wedge \liminf_{n \rightarrow \infty}\| x_n + x \| \leq RW(1, X) = 1. \]

So $\liminf_{n \rightarrow \infty}\| x_n + x \|  \leq 1 \mbox{ or } \liminf_{n \rightarrow \infty}\| x_n - x \|  \leq 1.$  Either way, Property($K$) implies $1 = \| x \| \leq K < 1,$ a contradiction.

\end{proof}

A similar proof can be used for the case of $R(1, X)$.

\begin{proposition}

If a separable Banach space, $X$, has Property($K$) then \newline $R(1, X) > 1$.

\end{proposition}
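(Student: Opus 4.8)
The plan is to argue by contradiction, paralleling the proof just given for $RW(1, X)$. Assume $X$ has Property($K$) with constant $K \in [0, 1)$ and yet $R(1, X) = 1$. Since Property($K$) rules out the Schur property, there is a weakly null sequence that does not converge to $0$ in norm; after passing to a subsequence and rescaling I would obtain $(y_n)$ with $y_n \rightharpoonup 0$ and $\|y_n\| = 1$ for all $n$, and, passing to a further (sparse) subsequence, $\|y_n - y_m\| \to d$ as $\min(n, m) \to \infty$ with $n \neq m$, where $1 \le d \le 2$ by weak lower semicontinuity of the norm and $D[(y_n)] = d$. If $d = 1$ the argument closes immediately as in the $RW$ case: $(y_n)$ is then admissible in the definition of $R(1, X)$, so $R(1, X) = 1$ forces $\liminf_n \|y_n + x\| \le 1$ for every $x$ with $\|x\| \le 1$; taking $\|x\| = 1 > K$ and applying Property($K$) to $(y_n)$ and $-x$ gives $1 = \|x\| \le K < 1$, a contradiction.

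The substance of the proof is therefore the case $d > 1$, and the whole difficulty is the extra constraint $D[(x_n)] \le 1$ in the definition of $R(1, X)$, which the sequence $(y_n)$ itself need not satisfy --- this is exactly what is absent in $RW(1, X)$. To get around it I would manufacture an admissible \emph{normalized} weakly null sequence. Set $x_j := y_j / d$, so $\|x_j\| = 1/d$ and $D[(x_j)] = 1$; using $R(1, X) = 1$ (equivalently $R(a, X) = a$ for all $a \ge 1$, Corollary 3.9) one shows by a routine induction on the number of terms that every alternating sum $x_{j_1} - x_{j_2} + x_{j_3} - \cdots$ of at least two of the $x_j$ with large distinct indices has norm at most $1$ --- one peels off the term of largest index and applies $R(1,X) = 1$ to the admissible sequence $(x_j)$ with the remainder as the fixed vector. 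Taking blocks of three, $z_n := x_{3n} - x_{3n-1} + x_{3n-2}$, one gets $z_n \rightharpoonup 0$, $\|z_n\| \le 1$, and, since $z_n - z_m$ is up to sign an alternating sum of six of the $x_j$, $D[(z_n)] \le 1$; a reverse-triangle estimate using $\|x_{3n} - x_{3n-1}\| \to 1$ gives $\liminf_n \|z_n\| \ge 1 - 1/d > 0$, so after a subsequence $\lim_n \|z_n\| = \gamma \in (0, 1]$. If $\gamma = 1$, then $(z_n)$ is an admissible normalized sequence and the case-$d=1$ argument finishes the proof; if $\gamma < 1$, renormalize to $w_n := z_n / \gamma$, a normalized weakly null sequence with $D[(w_n)] \le 1/\gamma$, and iterate the construction.

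The main obstacle I anticipate is showing that this iteration terminates --- that is, that one eventually reaches a normalized weakly null sequence with asymptotic diameter at most $1$. The rescaling by $d$ (and later by $\gamma$) costs a factor, and it is not transparent that the process drives the asymptotic diameter down to $1$ rather than stabilising at some $d^{*} > 1$; this is where the real work lies. I would attack it by exploiting Property($K$) (or the weak normal structure it yields) more quantitatively than the crude contrapositive used above, together with the ``$m_\infty$-in-the-large'' consequence of $R(1, X) = 1$ supplied by Corollary 3.9, namely that $\lim_n \|x_n + x\| = \|x\|$ whenever $(x_n)$ is admissible and $\|x\| \ge 1$, which pins down the behaviour of admissible sequences far from the origin and should be leveraged to control the interplay between $\gamma$ and $D$.
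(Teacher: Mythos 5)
You have correctly isolated the real difficulty, and it is worth saying up front that the paper's own proof does not engage with it: the published argument is essentially your $d=1$ case verbatim. It begins ``Consider $x_n \rightharpoonup 0$, $\|x_n\|=1$ for all $n$, $\|x\|=1$, and also let $\limsup_n\limsup_m\|x_n-x_m\|\le 1$,'' applies $R(1,X)=1$ to get $\liminf_n\|x_n+x\|\le 1$, and invokes Property($K$) to reach $1=\|x\|\le K<1$. The existence of a \emph{normalized} weakly null sequence with $D[(x_n)]\le 1$ is simply taken for granted there, whereas in general no such sequence need exist (in $\ell_2$, for instance, every normalized weakly null sequence has $D=\sqrt{2}$, which is consistent with the proposition since $R(1,\ell_2)>1$, but shows the existence is a genuine issue and presumably must be extracted from the hypothesis $R(1,X)=1$ itself). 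So your diagnosis of where the work lies is sharper than the paper's treatment.

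As a proof, however, your proposal is incomplete, and the gap is the one you name yourself: the iteration $(y_n)\mapsto(z_n)\mapsto(w_n)\mapsto\cdots$ is not shown to terminate, and nothing you write excludes the asymptotic diameter stabilising at some $d^*>1$ after renormalization, in which case no admissible normalized sequence is ever produced and the contradiction is never reached. There is also a smaller defect earlier in the construction: $R(1,X)=1$ only bounds a $\liminf$ over $n$ for each \emph{fixed} vector $x$, so the assertion that ``every alternating sum of the $x_j$ with large distinct indices has norm at most $1$'' is stronger than what the definition yields --- you get $\|x_j\pm v\|\le 1+\epsilon$ only for infinitely many $j$ depending on $v$ and $\epsilon$, so the induction must be organised as a diagonal subsequence extraction with accumulating errors, and the bound $D[(z_n)]\le 1$ correspondingly holds only after such an extraction. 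That part is routine to patch; the termination of the rescaling loop is not, and until it is supplied the argument does not prove the proposition. If you are content to match what the paper actually establishes, your first paragraph already does so, under the additional (unstated in the paper) hypothesis that a normalized weakly null sequence with $D\le 1$ exists.
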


\begin{proof}

Let $X$ have Property($K$) and assume that $R(X) = 1.$

Consider $x_n \rightharpoonup 0, \| x_n \| =1 \mbox{ for all } n$ and $x \in X$ where $\| x \| = 1.$  Also let 
\[ \limsup_{n \rightarrow \infty} \limsup_{m \rightarrow \infty}\| x_n - x_m \| \leq 1.\]

Then $R(1, X) = 1$ implies  $\liminf_{n \rightarrow \infty}\| x_n + x \| \leq R(1, X) = 1.$

Property($K$) implies $1 = \| x \| \leq K < 1,$ a contradiction.

\end{proof}

{\bf Remark} Note that proposition 5.7 implies proposition 5.6 which in turn implies proposition 5.5.

\section{Results concerning the dual, $X^*$}

Next is bring the dual, $X^*$, into play.  In [4] it was shown that if $X^*$ has the nonstrict *Opial condition and $R(X^*) < 2$ then $X$ has Property($K$). This leads to the following.

\begin{corollary}

Let $X$ be a separable Banach space where $X^*$ has the nonstrict *Opial condition.  If $R(X) = 1$ then $R(X^*) = 2.$

\end{corollary}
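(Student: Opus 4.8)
The plan is to argue by contradiction, chaining the result of [4] quoted just above with Proposition 5.5. Suppose, for contradiction, that $R(X^*) < 2$. By hypothesis $X^*$ has the nonstrict *Opial condition, so the quoted result of [4] (``if $X^*$ has the nonstrict *Opial condition and $R(X^*) < 2$ then $X$ has Property($K$)'') applies and yields that $X$ has Property($K$).

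Now $X$ is separable, so Proposition 5.5 applies: Property($K$) forces $R(X) > 1$. This contradicts the hypothesis $R(X) = 1$. Hence the assumption $R(X^*) < 2$ is untenable, and since $1 \leq R(Y) \leq 2$ for every Banach space $Y$, we conclude $R(X^*) = 2$.

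There is essentially no obstacle here beyond checking that the hypotheses line up: the nonstrict *Opial condition on $X^*$ is exactly what [4] requires, and the separability of $X$ — part of the standing assumption — is what Proposition 5.5 needs (it was used there via Proposition 5.2 together with the Dalby--Sims result of [5] that $c_0 \hookrightarrow X$ precludes Property($K$)). One could equally well phrase the argument without contradiction: Proposition 5.5 gives Property($K$) $\Rightarrow R(X) > 1$, so $R(X) = 1$ implies $X$ fails Property($K$), and the contrapositive of the [4] result then forces $R(X^*) = 2$. I would present the contradiction form, as it reads most cleanly.
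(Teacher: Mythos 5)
Your argument is correct and is exactly the intended one: the paper states this corollary immediately after quoting the result of [4], leaving the proof implicit, and the implicit proof is precisely your chain (contrapositive of Proposition 5.5 gives that $R(X)=1$ rules out Property($K$), and the contrapositive of the [4] result then forces $R(X^*)=2$). Nothing further is needed.
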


At this stage $R(X) = 1$ cannot be replaced by either $RW(1, X) = 1$ or $R(1, X) = 1.$ in both the result mentioned before corollary 6.1 and  in corollary 6.1.  For the condition $RW(1, X) = 1,$ proposition 5.3 can be invoked to get:

\begin{proposition}

Let $X$ be a separable Banach space with the nonstrict Opial condition and $RW(1, X) = 1$.  If $\ell_1 \not \hookrightarrow X$ then $RW(1, X^*) = 2.$

\end{proposition}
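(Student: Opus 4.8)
The plan is to feed the hypotheses into Proposition 5.3 and then push the resulting structure over to the dual using the Kalton--Werner machinery recorded in Remark 2 following Proposition 5.2. First I would observe that, $X$ being separable with the nonstrict Opial condition and $RW(1,X)=1$, Proposition 5.3 already gives that $X$ has property($m_\infty$); by the Dhompongsa--Kaewkhao theorem quoted earlier this is the same as $R(X)=1$. Adjoining the standing hypothesis $\ell_1\not\hookrightarrow X$, the results of Kalton and Werner cited in Remark 2 then say that $X^*$ has property($m_1^*$): for every weak$^*$ null sequence $(x_n^*)$ in $X^*$ and every $x^*\in X^*$,
\[ \limsup_{n\to\infty}\|x_n^*+x^*\| \;=\; \limsup_{n\to\infty}\|x_n^*\| \,+\, \|x^*\|. \]

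Next I would deduce $RW(1,X^*)=2$, where, as in Corollary 6.1, the modulus on a dual space is read with weak$^*$ null sequences. The bound $RW(1,X^*)\le 2$ is automatic since $RW(a,X)\le 1+a$ always. For $RW(1,X^*)\ge 2$, I would first note that, $X$ being infinite dimensional, the origin lies in the weak$^*$ closure of $S_{X^*}$ (the annihilator in $X^*$ of the linear span of any finite set in $X$ is nonzero), and, $X$ being separable, the weak$^*$ topology on $B_{X^*}$ is metrizable; hence there is a weak$^*$ null sequence $(x_n^*)$ with $\|x_n^*\|=1$ for every $n$. Fixing any $x^*\in S_{X^*}$ and applying property($m_1^*$) to $(x_n^*)$ and to its subsequences, I would pass to a subsequence with $\|x_n^*+x^*\|\to 2$ and then to a further subsequence with $\|x_n^*-x^*\|\to 2$; along this subsequence
\[ \liminf_{n\to\infty}\|x_n^*+x^*\|\wedge\liminf_{n\to\infty}\|x_n^*-x^*\| \;=\; 2, \]
and since $\|x^*\|\le 1$ and $\|x_n^*\|\le 1$ this is an admissible configuration in the definition of $RW(1,X^*)$, giving $RW(1,X^*)\ge 2$ and hence equality.

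The substantive content is already in hand: Proposition 5.3 supplies property($m_\infty$) for $X$, and the passage to property($m_1^*$) for $X^*$ is the quoted Kalton--Werner theorem, so no hard analysis remains. The main obstacle, such as it is, is bookkeeping. One must be sure the dual modulus in the statement is the weak$^*$ version — this is in fact forced, since for $X=c_0$ all hypotheses hold and $RW(1,c_0)=1$, yet the genuine weak modulus $RW(1,\ell_1)$ equals $1$ while the weak$^*$ one equals $2$ — and one must use that it is property($m_1^*$), not merely the nonstrict $^*$Opial condition, that drives the computation, so that one genuinely obtains the sum $\|x_n^*\|+\|x^*\|=2$ and not just a one-sided estimate. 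Producing the norming weak$^*$ null sequence is the only genuinely new, though routine, ingredient.
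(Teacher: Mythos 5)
Your proposal is correct, and its skeleton coincides with the paper's: both proofs start by feeding the hypotheses into Proposition 5.3 to get property($m_\infty$) for $X$, and both then use the Kalton--Werner transfer (recorded in Remark 2 after Proposition 5.2, available because $\ell_1 \not\hookrightarrow X$) to move the structure to $X^*$. You diverge only in the endgame. The paper's primary route observes that Property($M^*$) gives WORTH$^*$, hence $RW(1,X^*) = R(X^*)$, and then invokes Corollary 6.1 (which itself rests on the Property($K$) results of [4]) to get $R(X^*) = 2$; it merely asserts, in a final sentence, that one could instead use property($m_1^*$) directly. You carry out exactly that asserted alternative: you extract a weak$^*$ null sequence of norm-one functionals (using infinite-dimensionality and weak$^*$ metrizability of $B_{X^*}$ for separable $X$ --- an existence point the paper never makes explicit) and read off $\limsup_n\|x_n^* \pm x^*\| = \|x_n^*\| + \|x^*\| = 2$ along nested subsequences. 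Your version is more self-contained, avoiding the detour through WORTH$^*$, Corollary 6.1 and Property($K$), and it correctly pins down that the dual modulus must be the weak$^*$ one (your $c_0$ versus $\ell_1$ example is a useful sanity check the paper omits). The only caveat, shared with the paper, is the standing assumption that $X$ is infinite dimensional, without which the conclusion fails trivially.
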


\begin{proof}

If $X$ be a separable Banach space with the nonstrict Opial condition and $RW(1, X) = 1$ then by proposition 5.3, $X$ had property($m_\infty$).  Then $\ell_1 \not \hookrightarrow X$ implies $X^*$ has Property($M^*$) which implies WORTH* so $RW(1, X^*) = R(X^*).$  and the previous corollary can be used.  In fact $X^*$ has property($m_1^*$) and this property can be used to show that $RW(1, X^*) = 2.$

\end{proof}

The situation where $R(1, X) = 1$ is again unclear and the problem is an open one.

Earlier it was mentioned that $R(X) = 1$ is a strong enough condition to imply Property($M$).  It is straightforward to adapt that proof to show $R(X^*) = 1$ implies Property($M^*$). 

The proof used by Dhompongsa and Kaewkhao in [6] can again be easily transferred to the dual and weak* null sequences.  So $X^*$ has property($m_\infty^*$) if and only if $R(X^*) = 1.$

Next we investigate further the consequences of $R(X^*) = 1.$  First is the reverse of corollary 6.1.

\begin{proposition}

Let $X$ be a separable Banach space.  If $R(X^*) = 1$ then \newline $R(X) = 2.$

\end{proposition}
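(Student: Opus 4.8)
The plan is to run the standard chain of Kalton-type implications: $R(X^*)=1$ forces strong structure on $X^*$, that structure dualises to structure on $X$, and $R(X)=2$ then falls out. Throughout $X$ is infinite dimensional, as elsewhere in the paper; in finite dimensions every weak null sequence is norm null and the conclusion would be false, so this standing hypothesis is essential here.

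First I would collect the consequences of $R(X^*)=1$. As recorded just above, $R(X^*)=1$ is equivalent to $X^*$ having property($m_\infty^*$); in particular $X^*$ has property($M^*$), so $X$ is $M$-embedded, and hence $X$ contains no isomorphic copy of $\ell_1$ (a standard property of $M$-embedded spaces; it also follows from their being Asplund). Because $\ell_1 \not\hookrightarrow X$ and $X$ is infinite dimensional, $X$ fails the Schur property, so there is a weak null sequence $(x_n)$ in $X$ with $\| x_n \| = 1$ for all $n$ (start from any normalised sequence with no norm-convergent subsequence, pass to a weakly Cauchy subsequence by Rosenthal's theorem, and normalise the differences).

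The key step is to push property($m_\infty^*$) from $X^*$ down to $X$. Here I would invoke the Kalton--Werner duality [17] between property($m_p$) in $X$ and property($m_q^*$) in $X^*$ when $\frac1p + \frac1q = 1$ --- available precisely because $\ell_1 \not\hookrightarrow X$. Since $X^*$ has property($m_\infty^*$), it follows that $X$ has property($m_1$), that is,
\[ \limsup_{n \rightarrow \infty}\| x_n + x \| = \limsup_{n \rightarrow \infty}\| x_n \| + \| x \| \]
for every weak null $(x_n)$ in $X$ and every $x \in X$ (compare the remark in Section 5 that property($m_1^*$) in $X^*$ yields $R(X^*) = 2$). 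Applying this to the normalised weak null sequence $(x_n)$ found above and to any $x \in X$ with $\| x \| = 1$ gives $\limsup_{n \rightarrow \infty}\| x_n + x \| = 2$. Since $(x_n)$ and $x$ are admissible in the definition of $R(X)$, with $\limsup$ in place of $\liminf$, we obtain $R(X) \geq 2$, and therefore $R(X) = 2$.

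I expect the delicate point to be the implication ``$X^*$ has property($m_\infty^*$)'' $\Rightarrow$ ``$X$ has property($m_1$)'': the Kalton--Werner duality genuinely needs $\ell_1 \not\hookrightarrow X$, so the real content is the deduction $\ell_1 \not\hookrightarrow X$ from $R(X^*)=1$ through $M$-embeddedness of $X$. Granting that, the remainder is a routine unwinding of the definitions.
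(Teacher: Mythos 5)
Your overall strategy (extract a strong sequential property of $X$ from property($m_\infty^*$) of $X^*$, then read off $R(X)=2$ from a normalised weakly null sequence) is reasonable in outline, but the pivotal step fails. You deduce $\ell_1 \not\hookrightarrow X$ from $M$-embeddedness and then invoke the Kalton--Werner duality to pass from property($m_\infty^*$) on $X^*$ to property($m_1$) on $X$. However, the hypothesis $R(X^*)=1$ forces exactly the opposite containment: property($m_\infty^*$) applied to a Josefson--Nissenzweig sequence of norm-one weak* null functionals yields an (almost isometric) copy of $c_0$ inside $X^*$ (this is item (d) of Proposition 6.7), and by the Bessaga--Pe{\l}czy\'{n}ski theorem a dual space containing $c_0$ forces its predual to contain a complemented copy of $\ell_1$ --- which is precisely item (j) of Proposition 6.7: $\ell_1 \hookrightarrow X$. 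So the hypothesis $\ell_1 \not\hookrightarrow X$ that the Kalton--Werner transfer requires is exactly what fails, and property($m_1$) for $X$ is not available by this route. (If your chain through $M$-embeddedness and Asplundness were valid alongside the $c_0$ argument, it would show the hypothesis is vacuous, not that $R(X)=2$.) A secondary concern: the Kalton--Werner duality is established, under $\ell_1 \not\hookrightarrow X$, in the direction from ($m_p$) on $X$ with $p>1$ to ($m_{p'}^*$) on $X^*$; the endpoint case you need, descending from ($m_\infty^*$) on the dual to ($m_1$) on the predual, is not the form in which they state it and would need separate justification even if the $\ell_1$ obstruction were absent.

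The paper's proof avoids all of this machinery and is a direct duality computation. It fixes $x_n \rightharpoonup 0$ with $\lim_n\|x_n\| = 1$ and $\|x\|=1$, chooses norming functionals $x_n^*$ for $x_n$ with weak* cluster point $x^*$ and $y^*$ norming $x$, applies property($m_\infty^*$) once to the weak* null sequence $(x_n^* - x^*)$ to conclude $\liminf_n \|x_n^* - x^* + y^*\| = 1$, and then evaluates this asymptotically norm-one functional at $x_n + x$: the cross terms cancel or vanish and the value tends to $x_n^*(x_n) + y^*(x) \rightarrow 2$, giving $\liminf_n\|x_n+x\| \geq 2$. If you want to repair your argument, this direct evaluation is the substitute for the missing property($m_1$); the structural facts you cite about $M$-embeddedness are not usable here because they sit on the wrong side of the $\ell_1$ dichotomy.
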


\begin{proof}

Consider $x_n \rightharpoonup 0, \lim_{n \rightarrow \infty}\| x_n \| = 1, \| x_n \| \leq 1 \mbox{ for all } n \mbox{ and } \| x \| = 1.$  Then $\liminf_{n \rightarrow \infty}\| x_n + x \|  \leq R(X).$

Let $x_n^* \in S_{X^*}, x_n^*(x_n) = \| x_n \| \mbox{ for all } n.$  Without loss of generality $x_n^* \overset {*}{\rightharpoonup} x^* \mbox{ where } \newline \| x^* \| \leq 1.$

Let $y^* \in S_{X^*}, y^*(x) = \| x \| = 1.$

Because $R(X^*) = 1, X^*$ has property($m_\infty^*$) and so
\[ \liminf _{n \rightarrow \infty}\| x_n^* - x^* + y^* \|  =  \liminf _{n \rightarrow \infty}\| x_n^* - x^* \| \vee \| y^* \| = \| y^* \| = 1. \]
Now
\begin{align*}
2 & \geq \liminf_{n \rightarrow \infty}\| x_n + x \|  \\
& \geq \liminf_{n \rightarrow \infty}(x_n^* - x^* + y^*)(x_n + x)  \\
& \geq  \liminf_{n \rightarrow \infty} (x_n^*)(x_n) + x^*(x) - 0 - x^*(x) + 0 +y^*(x) \\
& = 2.
\end{align*}

Therefore $\liminf_{n \rightarrow \infty}\| x_n + x \| = 2.$  But $R(X) \geq \liminf_{n \rightarrow \infty}\| x_n + x \|.$  Therefore $R(X) = 2.$

\end{proof}

\begin{corollary}

Let $X$ be a separable Banach space.  If $R(X) < 2$ then $R(X^*) > 1.$

\end{corollary}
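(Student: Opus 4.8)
This corollary is precisely the contrapositive of Proposition 6.4, so the plan is to argue by contradiction using that proposition together with the elementary bound $1 \le R(Y) \le 2$ that holds for every Banach space $Y$ (recorded just after Definition 2.1, applied here with $Y = X^*$).

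First I would suppose, for contradiction, that the conclusion fails, i.e. that $R(X^*) \le 1$. Since $R(X^*) \ge 1$ automatically, this forces $R(X^*) = 1$. Next I would invoke Proposition 6.4, which tells us that $R(X^*) = 1$ implies $R(X) = 2$. This directly contradicts the hypothesis $R(X) < 2$, and the contradiction establishes that $R(X^*) > 1$.

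There is essentially no obstacle here: the entire content of the corollary is packaged in Proposition 6.4, and the present statement is just its logical reformulation. The only point worth stating explicitly is the use of the universal lower bound $R(X^*) \ge 1$ to upgrade ``$R(X^*) \le 1$'' to ``$R(X^*) = 1$'', which is what lets Proposition 6.4 apply. One could equivalently phrase the whole argument as a single line: contrapositive of Proposition 6.4, noting that the negation of $R(X^*) > 1$ is $R(X^*) = 1$ because $R(X^*)$ cannot drop below $1$.
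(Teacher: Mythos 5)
Your argument is correct and is exactly what the paper intends: the corollary is stated without proof precisely because it is the contrapositive of the preceding proposition ($R(X^*)=1 \Rightarrow R(X)=2$), combined with the trivial lower bound $R(X^*)\geq 1$ to convert ``not $>1$'' into ``$=1$''. The only slip is the label --- that proposition is numbered 6.3, not 6.4 --- but the substance matches the paper's reasoning.
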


{\bf Remark}  Because it is not known if $R(1, X^*) = 1 \mbox{ or } RW(1, X^*) = 1$ leads to $X^*$ having the nonstrict *Opial condition, $R(X^*) = 1$ in the proposition cannot be replaced with either of these two other properties.  In the case of $RW(1, X^*) = 1$ adding nonstrict *Opial leads to a similar result courtesy of proposition 5.3.

\begin{proposition}

Let $X$ be a separable Banach space.  If $RW(1, X^*) = 1$ and $X^*$ have the nonstrict *Opial condition then $RW(1, X) = 2.$

\end{proposition}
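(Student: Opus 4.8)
The plan is to push Proposition 5.3 into the dual space and then run the functional argument of Proposition 6.4. The proof of Proposition 5.3 uses only the lower semicontinuity of the norm for the relevant weak topology, the nonstrict Opial condition, and the hypothesis $RW(1,X)=1$; all three have exact analogues in $X^*$ for weak* null sequences — the norm of $X^*$ is weak* lower semicontinuous, $X^*$ is assumed to have the nonstrict *Opial condition, and $RW(1,X^*)=1$. So the argument of Proposition 5.3, read in $X^*$ with weak* null sequences, shows that $X^*$ has Property($M^*$), and in particular property($m_\infty^*$):
\[ \limsup_{n \rightarrow \infty}\| x_n^* + z^* \| = \limsup_{n \rightarrow \infty}\| x_n^* \| \vee \| z^* \| \]
for every weak* null sequence $(x_n^*)$ in $X^*$ and every $z^* \in X^*$.

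Next I would argue as in Proposition 6.4. As there, pick $x_n \rightharpoonup 0$ in $X$ with $\| x_n \| \leq 1$ for all $n$ and $\lim_{n \rightarrow \infty}\| x_n \| = 1$, and $x \in X$ with $\| x \| = 1$, so that $RW(1,X) \geq \liminf_n \| x_n + x \| \wedge \liminf_n \| x_n - x \|$. Choose $x_n^* \in S_{X^*}$ with $x_n^*(x_n) = \| x_n \|$; since $X$ is separable, $B_{X^*}$ is weak* sequentially compact, so after passing to a subsequence $x_n^* \overset{*}{\rightharpoonup} x^*$ with $\| x^* \| \leq 1$, and choose $y^* \in S_{X^*}$ with $y^*(x) = 1$. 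Applying property($m_\infty^*$) to the weak* null sequence $(x_n^* - x^*)$, first against $x^*$ and then against $\pm y^*$, gives $1 = \limsup_n \| x_n^* \| = \limsup_n \| x_n^* - x^* \| \vee \| x^* \|$, hence $\limsup_n \| x_n^* - x^* \| \leq 1$, and then $\limsup_n \| x_n^* - x^* \pm y^* \| = \limsup_n \| x_n^* - x^* \| \vee \| y^* \| \leq 1$. Consequently, after a harmless normalisation of these functionals,
\[ \liminf_{n \rightarrow \infty}\| x_n + x \| \geq \liminf_{n \rightarrow \infty}(x_n^* - x^* + y^*)(x_n + x) \]
and
\[ \liminf_{n \rightarrow \infty}\| x_n - x \| \geq \liminf_{n \rightarrow \infty}(x_n^* - x^* - y^*)(x_n - x); \]
expanding each pairing and using $x_n^*(x_n) \rightarrow 1$, $x_n^*(x) \rightarrow x^*(x)$, $x^*(x_n) \rightarrow 0$, $y^*(x_n) \rightarrow 0$ and $y^*(x) = 1$ — so that the $x^*(x)$ terms cancel — both right-hand sides equal $2$. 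Thus $\liminf_n \| x_n + x \| \wedge \liminf_n \| x_n - x \| = 2$, so $RW(1,X) \geq 2$; since $RW(1,X) \leq 2$ always, $RW(1,X) = 2$.

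I expect the only real work to be the transfer in the first paragraph — verifying that the proof of Proposition 5.3 genuinely survives the move to $X^*$ with weak* null sequences (the remark preceding this proposition asserts that it does, via Proposition 5.3). The secondary point, that property($m_\infty^*$) only yields $\limsup_n \| x_n^* - x^* \pm y^* \| \leq 1$ rather than a bound valid for every individual $n$, is absorbed into the ``harmless normalisation'' by the usual $\varepsilon$-perturbation; and the cancellation in the two pairings is routine, being identical to the computation in Proposition 6.4.
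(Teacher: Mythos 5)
Your proposal is correct and follows exactly the paper's route: the paper's entire proof is the remark that Proposition 5.3 transfers to $X^*$ with weak* null sequences (yielding property($m_\infty^*$)) and that the argument of Proposition 6.4 then applies, which is precisely what you carry out. Your additions --- the explicit bound $\limsup_{n}\|x_n^* - x^*\| \leq 1$ obtained by applying property($m_\infty^*$) against $x^*$, the second functional $x_n^* - x^* - y^*$ to handle $\liminf_{n}\|x_n - x\|$, and the normalisation remark --- are exactly the details the paper leaves implicit.
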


\begin{proof}

Proposition 5.3 transferred to $X^*$ means that $X^*$ has property($m_\infty$) and the proof follows the same lines as the previous one.
 
\end{proof}

\begin{corollary}

Let $X$ be a separable Banach space and $X^*$ have the nonstrict *Opial condition.  If $RW(1, X) < 2$ then $RW(1, X^*) > 1.$

\end{corollary}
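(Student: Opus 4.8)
The plan is to obtain this corollary immediately from Proposition 6.5 by contraposition, in exactly the way Corollary 6.4 was extracted from Proposition 6.3. The only auxiliary fact required is the elementary lower bound stated at the start of Section 4, namely $1 \vee a \leq RW(a, X) \leq 1 + a$, applied in the dual space with $a = 1$: this gives $1 \leq RW(1, X^*) \leq 2$, so the negation of the conclusion ``$RW(1, X^*) > 1$'' is precisely ``$RW(1, X^*) = 1$''.

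So I would argue as follows. Suppose, for a contradiction, that $RW(1, X^*) \leq 1$, hence $RW(1, X^*) = 1$. By hypothesis $X^*$ has the nonstrict *Opial condition and $X$ is separable, so both hypotheses of Proposition 6.5 are met. Applying Proposition 6.5 gives $RW(1, X) = 2$, which contradicts the standing assumption $RW(1, X) < 2$. Therefore $RW(1, X^*) \neq 1$, and combined with $RW(1, X^*) \geq 1$ this yields $RW(1, X^*) > 1$, as desired.

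There is essentially no obstacle here, since the statement is a formal rephrasing of Proposition 6.5; the only step requiring care is checking that the hypotheses of Proposition 6.5 genuinely apply, which they do because separability of $X$ is assumed throughout and the nonstrict *Opial condition on $X^*$ is explicitly part of the hypothesis. It is worth flagging, as in the remark preceding Proposition 6.5, that the nonstrict *Opial assumption on $X^*$ cannot be discarded with the present methods: doing so would require knowing whether $RW(1, X^*) = 1$ by itself forces $X^*$ to satisfy the nonstrict *Opial condition, which is open.
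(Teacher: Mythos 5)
Your proof is correct and is exactly the argument the paper intends: the corollary is the contrapositive of Proposition 6.5, combined with the elementary bound $RW(1,X^*)\geq 1$, and the paper leaves this unstated for that reason. Nothing further is needed.
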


The situation with $X^*$ having the nonstrict *Opial condition and $R(1, X) = 1$ is unresolved.  Please see the discussion after the proof of proposition 5.3.

Some of of the other consequences of $R(X^*) = 1$ are listed in the next proposition.

\begin{proposition}

Let $X$ be a separable Banach space with $R(X^*) = 1$ then

\begin{enumerate}

\item[(a)] $X^*$ has Property($M^*$).

\item[(b)] $X^*$ has a WORTH*.

\item[(c)] $X^*$ has the w*FPP.

\item[(d)] $c_0 \hookrightarrow X^*$.

\item[(e)] $X^*$ has the Radon-Nikodym Property ($RNP$).

\item[(f)] $X$ has Property($M$).

\item[(g)]  $X$ is an $M$-ideal in $X^{**}.$  ($X$ is $M$-embedded.)

\item[(h)] $X^*$ is $L$-summand in $X^{***}.$  ($X^*$ is $L$-embedded.)

\item[(i)] $\ell_1  \hookrightarrow X^*$

\item[(j)] $\ell_1  \hookrightarrow X$

\item[(k)] $c_0  \hookrightarrow X.$

\item[(l)] $X$ has the wFPP.

\item[(m)] $X$ is an Asplund space.

\end{enumerate}

\end{proposition}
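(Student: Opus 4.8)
The plan is to treat this proposition as a compendium: each item is obtained from $R(X^*)=1$ by stringing together results already in the paper with a few standard facts about $M$-embedded spaces, and the resulting list turns out to contradict itself, so that for an infinite-dimensional separable $X$ the hypothesis is in fact never met and (a)--(m) hold vacuously. I would carry out the genuine implications first and point out the collapse at the end.

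The first move is to unwind the hypothesis. As observed in Section~6, the Dhompongsa--Kaewkhao argument transfers to $X^*$ with weak* null sequences, so $R(X^*)=1$ is equivalent to $X^*$ having property($m_\infty^*$): $\limsup_n\|x_n^*+x^*\|=\limsup_n\|x_n^*\|\vee\|x^*\|$ for every weak* null $(x_n^*)$ and every $x^*\in X^*$. Items (a) and (b) are then immediate — monotonicity in $\|x^*\|$ is Property($M^*$) and invariance under $x^*\mapsto-x^*$ is WORTH* — and (c) follows from the weak* version of the result of [13] that Property($M^*$) implies the w*FPP.

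Next I would feed Property($M^*$) into the structural facts recorded in Remark~2 following Proposition~5.2. That remark gives at once that $X$ is $M$-embedded, which is (g); that $X^*$ is accordingly an $L$-summand in $X^{***}$, which is (h); and that $X^*$ is weakly sequentially complete and contains a complemented copy of $\ell_1$, which is (i). Since $M$-embedded spaces are Asplund spaces, (m) follows, hence (e) because Asplund is by definition having a dual with the RNP, and hence also that $X^*$ is separable; with $X^*$ separable the proof of Proposition~5.2 transfers to the dual (with $m_\infty^*$ in place of $m_\infty$) to give $c_0\hookrightarrow X^*$, which is (d). For (f) — that $X$ itself has Property($M$) — I would invoke Kalton's duality from [16] and [17], under which property($m_\infty^*$) on $X^*$ corresponds to property($m_1$) on $X$ when $\ell_1\not\hookrightarrow X$ (and $\ell_1\not\hookrightarrow X$ holds since $X$ is Asplund); property($m_1$) implies Property($M$). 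This is the one step that is not a mere citation, and I expect it to be the main obstacle.

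Finally I would note that the list undermines itself, which is really its content. A weakly sequentially complete space contains no copy of $c_0$, so (d) is incompatible with the weak* sequential completeness of $X^*$ just obtained; an Asplund space contains no copy of $\ell_1$, so (m) is incompatible with (j); and the route to (f) needed $\ell_1\not\hookrightarrow X$, again at odds with (j). Hence no infinite-dimensional separable Banach space has $R(X^*)=1$, and the conjunction (a)--(m) — including the remaining items (f), (j), (k), and (l), the last of which would in any case follow from (f) via [13] — holds vacuously. The points needing care are: that the Dhompongsa--Kaewkhao and Kalton machinery genuinely survives the passage to weak* null sequences; that one really does know $X^*$ is separable before invoking Proposition~5.2 on the dual side; and the implicit restriction to infinite-dimensional $X$, since a finite-dimensional space trivially has $R(X^*)=1$ yet fails (d), (i) and (k).
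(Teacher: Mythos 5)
Your proposal follows the same skeleton as the paper for most items: $R(X^*)=1$ gives property($m_\infty^*$), hence Property($M^*$) and WORTH*, hence (via [13], [16] and the standard facts about $M$- and $L$-embedded spaces from [15]) items (a), (b), (c), (g), (h) and the copy of $\ell_1$ in $X^*$. The local differences are these. The paper obtains (e) from Lima's result that WORTH* implies the RNP and then gets (m) from (e); you run this chain in the opposite direction ($M$-embedded $\Rightarrow$ Asplund $\Rightarrow$ $X^*$ has RNP). The paper obtains (f) by the direct citation that Property($M^*$) implies Property($M$) [16], which needs no hypothesis on $\ell_1$, whereas your route through the $(m_1)$/$(m_\infty^*)$ duality does need $\ell_1\not\hookrightarrow X$. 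Most importantly, the paper proves (j) and (k) directly --- (j) by assuming $\ell_1\not\hookrightarrow X$, deducing from Kalton--Werner that $X^*$ is separable, and contradicting (d) via the fact that $c_0$ does not embed in a separable dual; (k) from the fact that a nonreflexive $M$-embedded space contains a complemented copy of $c_0$ --- while you leave (j) and (k) to be absorbed by a vacuousness claim. As a proof of the proposition in the sense the paper intends, your proposal is therefore incomplete on (j) and (k).

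That said, your central observation is genuine and the paper does not make it: the conclusion list is internally inconsistent. Items (d) and (e) cannot coexist ($c_0$ fails the RNP and the RNP passes to closed subspaces); (d) is likewise incompatible with the weak sequential completeness of an $L$-embedded $X^*$; and (j) is incompatible with (m), since $\ell_1$ is not an Asplund space. The paper even reproduces one of these collisions internally: its proof of (j) extracts a contradiction from ``$X^*$ separable and $c_0\hookrightarrow X^*$,'' yet (e) and (m) already force $X^*$ to be separable unconditionally. So either the hypothesis $R(X^*)=1$ is never satisfied by an infinite-dimensional separable space --- in which case the proposition is vacuous, your reading is the correct one, and the remarks following it (which treat these spaces as a nonempty family with the wFPP but not the FPP) are empty --- or one of the transferred citations is misapplied, the most suspect being item (d): the weak* transfer of Proposition 5.2, whose weak-null version rests on machinery requiring $\ell_1\not\hookrightarrow X$, and whose conclusion $c_0\hookrightarrow X^*$ is impossible once $X^*$ has the RNP. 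In that latter case your vacuousness argument collapses together with the paper's item (d), and both proofs have a gap at the same place. Separately, you are right that the proposition is false as literally stated for finite-dimensional $X$, where $R(X^*)=1$ holds trivially but (d), (i), (j), (k) fail; the infinite-dimensionality assumption from the introduction needs to be made explicit here.
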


\begin{proof}\quad

\begin{enumerate}

\item[(a)] As mentioned this was proved for $R(X) = 1$ in [2].  It also follows from property($m_\infty^*$).

\item[(b)] This is an easy consequence of Property($M^*$).

\item[(c)] This follows from Property($M^*$) and the result from [13].

 \item[(d)]  The proof in proposition 5.2 transfers easily to $X^*$.  Property($m_\infty^*$)  ensures that this condition is satisfied.

\item[(e)] In [19] Lima showed that WORTH* implied the $RNP.$

\item[(f)] That Property($M^*$) implies Property($M$) was proved in [16].

\item[(g)]  Again, this was proved in [16].

\item[(h)] It is well known that if $X$ is $M$-embedded then $X^*$ is $L$-embedded.  See [15] for example.

\item[(i)] Since $c_0  \hookrightarrow X^*, X$ is nonreflexive.  A nonreflexive Banach space that is $M$-embedded has a complemented copy of $\ell_1$ in $X^*.$  See [15] for example.

\item[(j)] Kalton and Werner in [17] showed that if $X$ was separable, has Property($M$) and $\ell_1 \not \hookrightarrow X$ then $X^*$ was separable.  But $c_0$ cannot be a subspace of a separable dual space.  See [20] for example.  This contradicts point (4).  So  $\ell_1  \hookrightarrow X.$

\item[(k)] Again from [15], any nonreflexive Banach space that is $M$-embedded contains a complemented copy of $c_0.$

\item[(l)]  This was proved in [13] using Property($M$).

\item[(m)] $X^*$ having the $RNP$ is equivalent to $X$ being an Asplund space.

\end{enumerate}

\end{proof}

{\bf Remarks} 

\begin{enumerate}

\item[1.] Those separable Banach spaces where $R(X^*) = 1$ form a collection of Banach spaces that have the wFPP when $R(X) = 2.$ 

\item[2.] It is of interest to mention some of the other properties that $X^*$ or $X$ do NOT have.  $X^*$ does not have weak* normal structure nor Property($K^*$) nor a weak* Kadec-Klee norm.  Also $X^*$ is not separable, is not reflexive and does not have the weak* Schur property.  $X$ does not have weak normal structure nor Kadec-Klee norm, is not reflexive and does not have the Schur property.  See [2] for details.

\item[3.] Both $X$ and $X^*$ fail the FPP.  This because Pfitzner showed in [23] that every nonreflexive subspace of an $M$-embedded Banach space contains an asymptotic isometric copy of $c_0$ and every nonreflexive subspace of an $L$-embedded Banach space contains an asymptotic isometric copy $\ell_1.$  From these properties flows the failure of the FPP.  See [8] and [9].

\item[4.] Thus separable Banach spaces, $X,$ where $R(X^*) = 1$ form a family of spaces that have the wFPP but not the FPP.  Similarly, the $X^*s$ form a family of spaces that have the w*FPP but not the FPP.

\item[5.] The results of this proposition and the preceding remarks will still remain valid if $R(X^*) = 1$ is replaced by $X^*$ has Property($M^*$) and $c_0 \hookrightarrow X^*$.

\item[6.] More on Property($M^*$) and reflexivity can be found in [3].

\item[7.]  If $R(X^*) = 1$ is changed to $RW(1, X^*) = 1$ and $X^*$ has the nonstrict *Opial condition then the same list of properties is valid.

\item[8.] The result of  changing $R(X^*) = 1$ to $R(1, X^*) = 1$ and $X^*$ has the nonstrict *Opial condition remains open.

\end{enumerate}

\end{document}